\newtheorem{thm}{Theorem}[section]
\newtheorem{prop} [thm]{Proposition}
\newtheorem{lem} [thm]{Lemma}
\newtheorem{cor}[thm]{Corollary}
\newtheorem*{thm*}{Theorem}
\newtheorem*{quest*}{Question}
\theoremstyle{definition}
\newtheorem{defin}[thm]{Definition}
\theoremstyle{remark}
\newtheorem{rem}[thm]{Remark}
\def\Z{\mathbb{Z}}
\def\N{\mathbb{N}}
\def\A{\mathbb {A}}
\def\O{\mathcal O}
\def\homm#1#2#3{\mathrm{Hom}_{#1}(#2,#3)}
\def\spec#1{\text{Spec}(#1)}
\def\Ko#1{\widetilde K_0Sp(#1)}
\def\mul#1#2{\mu_{l^{#1}}^{\otimes #2}}
\def\cha#1{\mathrm{char}(#1)}
\title{On Stably Free Modules over Affine Algebras}
\author{Jean Fasel}
\author{Ravi A. Rao}
\author{Richard G. Swan}
\keywords{stably free modules, affine algebras, Grothendieck-Witt groups, symplectic Witt groups}
\subjclass[2000]{Primary: 13C10; Secondary: 19G12, 19G38}
\begin{document}

\begin{abstract}
If $X$ is a smooth affine variety of 
dimension $d$ over an algebraically closed field $k$, and if $(d-1)!\in k^\times$ 
then any stably trivial vector bundle of rank $(d-1)$ over $X$ is trivial. 
The hypothesis that $X$ is smooth can be weakened to $X$ is normal
if $d \geq 4$. 
\end{abstract}

\maketitle

\tableofcontents

\section{Introduction}

Let $R$ be an affine algebra of dimension $d$ over a field $k$. Recall that a projective $R$-module $P$ of rank $r$ is said to be \emph{cancellative} if for any projective module $Q$ such that $P\oplus R^n\simeq Q\oplus R^n$ then $P\simeq Q$. A famous theorem by Bass and Schanuel asserts that any $P$ is cancellative if $r>d$ (\cite[Theorem 9.3]{Bass64}). In general, this is the best possible bound as shown by the well known example of the tangent bundle of the real algebraic sphere of dimension $2$.

However, A.A. Suslin has shown in \cite{Suslin2}, \cite{Suslin1} that if $k$ is algebraically closed, then any projective module $P$ of rank $d$ is cancellative. In \cite{Suslin5}, he further showed that if $\mathrm{c.d.}(k)\leq 1$ then $R^d$ is cancellative provided $(d-1)!\in k^\times$, thus proving that stably free modules of rank $\geq d$ are actually free in this setting. As a consequence of this result, S.M. Bhatwadekar showed in \cite{Bhatwadekar} that any projective module $P$ of rank $d$ is cancellative in this situation. 

The natural question is to know under which conditions projective $R$-modules of rank $r<d$ are cancellative. A weaker question, asked by Suslin in \cite{Suslin1}, is to understand the conditions under which $R^r$ itself is cancellative, i.e. when stably free modules of rank $r$ are free. 

In the same article, A.A. Suslin mentions that all stably free non free modules known to him are of rank $\leq (\dim R- 1)/2$. In response to this, in  \cite{mohan} N. Mohan Kumar gave a method of constructing a non-free stably free projective module of rank $d-1$ over the coordinate ring of certain basic open sets $D(f)$ of the affine space $\A^d_k$, for $d \geq 4$, where $k$ is of transcendence degree one over an algebraically closed field. Clearing ``denominators'' he could get a smooth, rational affine variety of dimension $d$ over an algebraically closed field and a rank $d-2$ stably free non-free projective module over it. 

Thus the above question boils down to understand if stably free modules of rank $d-1$ over $R$ are free. In \cite{Fa09} the first named author made the initial breakthrough to Suslin's question by showing that over a smooth threefold $R$, any stably free rank two projective module is free if $6\in R^\times$. In this article, we extend this result and give a final answer to Suslin's question (Theorem \ref{faselswr} in the text).

\begin{thm*}
Let $R$ be a $d$-dimensional normal affine algebra over an algebraically closed field $k$ such that $\gcd((d-1)!, \cha k)=1$. If $d=3$, suppose moreover that $R$ is smooth. Then every stably free $R$-module $P$ of rank $d-1$ is free.
\end{thm*}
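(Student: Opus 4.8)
The plan is to translate the statement into one about unimodular rows, to equip the relevant orbit set with an abelian group structure, and finally to identify a quotient of that group with a Grothendieck--Witt (or Chow--Witt) invariant that vanishes because $\dim R=d$ and $k$ is algebraically closed. First I would reduce to unimodular rows: if $P$ is stably free of rank $d-1$, then $P\oplus R^{t}\simeq R^{d-1+t}$ for some $t\geq 1$, and iterating Bass's cancellation theorem (valid in rank $>d$) together with Suslin's cancellation theorem for rank-$d$ projective modules over an affine algebra over an algebraically closed field, one reduces to $t=1$. Then $P\simeq\mathrm{ker}(v\colon R^{d}\to R)$ for a unimodular row $v\in\mathrm{Um}_{d}(R)$, and $P$ is free exactly when $v$ is completable. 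The cases $d\leq 2$ being trivial, assume $d\geq 3$.

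Next I would endow $\mathrm{Um}_{d}(R)/E_{d}(R)$ with an abelian group structure with $[(1,0,\dots,0)]$ as zero element (so that, in the present range, $P$ free corresponds to the zero class): for $d\geq 4$ this is van der Kallen's group structure, while for $d=3$ one uses instead Vaserstein's symbol identifying $\mathrm{Um}_{3}(R)/E_{3}(R)$ with the elementary symplectic Witt group $W_{E}(R)$ --- this is precisely where smoothness enters, and the case $d=3$ then essentially recovers the first author's earlier theorem. In this group the first-coordinate (Mennicke-type) addition law combined with Suslin's $(n-1)!$-completion theorem shows that $\mathrm{Um}_{d}(R)/E_{d}(R)$ is annihilated by $(d-1)!$; hence its only possible torsion is $l$-primary for primes $l\leq d-1$, all invertible in $k$ by hypothesis. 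The next step is to realise $[v]$, through Suslin's matrices $v\mapsto S_{v}$, inside a suitable symplectic (for $d$ even) or orthogonal (for $d$ odd) Grothendieck--Witt group, and thence --- after passing to the associated graded, i.e. to Chow--Witt groups together with $l$-adic \'etale cohomology with coefficients $\mul{n}{j}$ --- inside the $d$-th Chow--Witt group $\widetilde{CH}^{d}(R)$, the ``Euler class'' of $v$. The factorial hypothesis $\gcd((d-1)!,\mathrm{char}\,k)=1$ is used repeatedly here: it makes the prime-by-prime passage between Grothendieck--Witt groups and \'etale cohomology valid for each $l\leq d-1$, and it lets one divide out the $(d-1)!$-torsion discrepancy between the orbit group and $\widetilde{CH}^{d}(R)$.

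It then remains to prove vanishing. Since $P$ is stably free its total Chern class is that of a free module, so $c_{d}(P)\in CH^{d}(R)$ is zero. I would then use the fibre-square description of $\widetilde{CH}^{d}(R)$ in terms of $CH^{d}(R)$, $CH^{d}(R)/2$ and the cohomology of powers $\mathbf{I}^{j}$ of the fundamental-ideal sheaf: for $j\geq d+1$ every term of the Gersten complex computing $H^{\bullet}(R,\mathbf{I}^{j})$ is $I^{j-i}$ of a residue field of a codimension-$i$ point, which vanishes since that field has $2$-cohomological dimension $\leq d-i<j-i$ (using $\mathrm{char}\,k\neq 2$, as $2\mid(d-1)!$). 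Hence $H^{d}(R,\mathbf{I}^{d+1})=0$, so $\widetilde{CH}^{d}(R)\cong CH^{d}(R)$ and the Euler class of $v$ equals $c_{d}(P)=0$; combined with the vanishing of the $l$-primary refinements (Artin vanishing gives $H^{i}_{\mathrm{et}}(R,-)=0$ for $i>d$, and the critical-degree groups are controlled by the structure of $CH^{d}$ of an affine variety over $\bar k$), this forces $v$ completable, hence $P$ free. For $d\geq 4$ the weaker hypothesis of normality suffices because the singular locus then has codimension $\geq 2$, leaving enough room to run the Gersten-type resolutions and the patching after a Lindel-style reduction to the regular case.

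The main obstacle is the middle step: constructing a well-defined homomorphism from $\mathrm{Um}_{d}(R)/E_{d}(R)$ into $\widetilde{CH}^{d}(R)$ through the symplectic and orthogonal Witt groups, and --- harder --- showing it is injective enough that vanishing of the Euler class forces $v$ into the trivial orbit. Controlling the kernel of this map uniformly in $d$ --- which is exactly what the $(d-1)!$-torsion bound and the $l$-adic comparisons are designed for --- while assuming only that $R$ is normal (rather than smooth) when $d\geq 4$, is the technically demanding heart of the argument.
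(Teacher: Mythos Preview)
Your proposal diverges from the paper's proof in its central mechanism, and the gap you yourself identify is real and not filled. The paper does \emph{not} work directly in dimension $d$ via an Euler-class map to $\widetilde{CH}^d(R)$. Instead, for $d\geq 4$ it uses Swan's Bertini theorem to cut the unimodular row: after elementary moves (using normality only to ensure the singular locus has height $\geq 2$, so that one may arrange $a_d\equiv 1$ modulo the singular ideal), the quotient $B=R/(a_4,\ldots,a_d)$ is a \emph{smooth affine threefold}, and the problem is pushed down to $Um_3(B)/E_3(B)$. All the Grothendieck--Witt and $K$-cohomology machinery is then applied on this threefold, not on $R$.

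On $B$ the argument is not ``Euler class vanishes'' but rather \emph{divisibility}: one shows $W_E(B)\simeq GW^3_{1,\mathrm{red}}(B)\simeq H^2(B,K_3)$ via the Gersten--Grothendieck--Witt spectral sequence, and then that $H^2(B,K_3)$ is divisible prime to $\cha k$ (via Merkurjev--Suslin and Bloch--Ogus). Divisibility lets one write $[\overline a_1,\overline a_2,\overline a_3]=(d-1)!\cdot[b_1,b_2,b_3]$; Rao's Antipodal Lemma (valid because $\sqrt{-1}\in k$) turns this into $[b_1^{(d-1)!},b_2,b_3]$; and Suslin's completion theorem finishes. Your route instead tries to show the orbit group is $(d-1)!$-torsion and then kill it by an injective Euler-class map, but you provide no argument for that injectivity---and this is not a technicality, it is essentially the whole problem. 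Note also two slips: for a rank $d-1$ module the Euler class lives in $\widetilde{CH}^{d-1}(R)$, not $\widetilde{CH}^{d}(R)$, so your Arason--Pfister vanishing of $H^d(R,\mathbf I^{d+1})$ targets the wrong degree; and $c_d(P)=0$ is vacuous for a rank $d-1$ bundle. Finally, the ``Lindel-style reduction'' you invoke for the normal case does not appear---normality is used exactly once, to control the singular locus before Bertini.
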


The solution of this problem of Suslin is an important bridge between classical algebraic K-theory, higher Grothendieck-Witt groups, and Milnor K-theory. This paper highlights these connections which will
play a critical role in future development of the subject.

We motivate our method by recollecting the salient points of Suslin's proof that stably free modules of rank $d$ are free over affine algebras of dimension $d$ over a field with $\mathrm{c.d.}(k)\leq 1$:

\begin{enumerate}[$\bullet$]
\item In view of the cancellation theorem of Bass and Schanuel, any stably free module $P$ of rank $d$ corresponds to a unimodular row $(a_1,\ldots,a_{d+1})$ of length $d+1$.
\item By Swan's Bertini theorem \cite{Swan} one can ensure, after an elementary transformation, that $C:=R/(a_4,\ldots,a_{d+1})$ is a surface and $B:=R/(a_3, \ldots, a_{d+1})$ is a smooth curve over $k$, provided $k$ is infinite (when $k$ is finite, the result follows from a result of Vaserstein). 
\item In this situation, there exists a well-defined map 
$$Um_2(B)/SL_2(B)\cap ESp(B)\to Um_{d+1}(R)/E_{d+1}(R)$$ 
under which the class of $(\overline a_1,\overline a_2)$ is sent to the class of $(a_1,\ldots,a_{d+1})$ (\cite[Corollary 2.3]{Suslin5}).
\item There is a natural bijection $Um_2(B)/SL_2(B)\cap ESp(B)\to K_1Sp(B)$ and the forgetful map $K_1Sp(B)\to SK_1(B)$ is an isomorphism (\cite[Proposition 1.7]{Suslin5}).
\item The Brown-Gersten-Quillen spectral sequence yields an isomorphism $SK_1(B)\simeq H^1(B,K_2)$, and the latter is a uniquely divisible group prime to the characteristic of $k$ (\cite[Proposition 1.4, Theorem 1.8]{Suslin5}). 
\item As a consequence of the above points, we can suppose up to elementary transformations that $(a_1,a_2,a_3,\ldots,a_{d+1})=(b_1^{d!},b_2,a_3,\ldots,a_{d+1})$. The right hand term is the first row of an invertible matrix (\cite[Theorem 2]{Suslin1}) and therefore $P$ is free.
\end{enumerate}

In \cite[Remark 2.5]{Suslin5}, Suslin observes that this method might generalize to a proof that stably free modules of rank $d-1$ are free. Indeed, our method follows closely his ideas. More precisely, we proceed as follows:
\begin{enumerate}[$\bullet$]
\item In view of Suslin's cancellation theorem, any stably free module $P$ of rank $d-1$ corresponds to a unimodular row $(a_1,\ldots,a_d)$ of length $d$.
\item Under our hypothesis, one can ensure using Swan's Bertini theorem that $B:=R/(a_4,\ldots,a_d)$ is a smooth threefold over $k$ (performing some elementary transformations if necessary).
\item There is a natural map
$$Um_3(B)/E_3(B)\to Um_d(R)/E_d(R)$$
such that $(\overline a_1,\overline a_2,\overline a_3)\mapsto (a_1,a_2,a_3,\ldots,a_d)$.
\item Using Rao-van der Kallen's result (\cite[Corollary 3.5]{Rao94}), the Vaserstein symbol 
$$V:Um_3(B)/E_3(B)\to W_E(B)$$
is a bijection, where $W_E(B)$ is the elementary symplectic Witt group as defined in \cite{SV}. Thus $Um_3(B)/E_3(B)$ is endowed with the structure of an abelian group.
\item There is an isomorphism $W_E(B)\simeq GW_{1,\mathrm{red}}^3(B)$, where $GW_{1,\mathrm{red}}^3(B)$ is a reduced Grothendieck-Witt group of $B$.
\item Using the Gersten-Grothendieck-Witt spectral sequence, the group $GW_{1,\mathrm{red}}^3(B)$ is proved to be isomorphic to $H^2(B,K_3)$, and the latter is shown to be divisible using the Merkurjev-Suslin theorem on $K_2^M$ (\cite{Merkusus}). 
\item As $\sqrt {-1}\in k$, we can use Rao's antipodal Lemma \cite[Lemma 1.3.1]{Rao87} and the above points to suppose that $(a_1,a_2,a_3,\ldots,a_d)=(b_1^{(d-1)!},b_2,a_3,\ldots,a_d)$ up to elementary transformations. This allows to conclude as in Suslin's proof.
\end{enumerate}

The organization of the paper is as follows. Section \ref{gw} gives the basic definitions and results of the theory of Grothendieck-Witt groups as developed by M. Schlichting. In Section \ref{elementary}, we recall the definition of the elementary symplectic Witt group $W_E$. We also recall an exact sequence, later used in Section \ref{ident} to show that the elementary symplectic Witt group is precisely a Grothendieck-Witt group (Theorem \ref{main} in the text). This is one of the main observations of this work, as it allows to use the powerful techniques of the theory of Grothendieck-Witt groups to study $W_E$. Among these techniques, one of the most useful is the Gersten-Grothendieck-Witt spectral sequence. It is the analogue of the classical Brown-Gersten-Quillen spectral sequence in $K$-theory. For a smooth affine threefold $R$ over an algebraically closed field, this sequence is easy to analyse and allows to prove that $W_E(R)\simeq H^2(R,K_3)$. In Section \ref{div}, we show that this cohomology group is divisible. We claim no originality in this computation, which seems to be well-known to specialists. All the pieces fall together in Section \ref{principal} where we finally prove our main theorem, following the path described above. As a consequence, we derive from the main result a few prestabilization results for the functor $K_1$.  

It must be observed that the question whether \emph{every} projective module of rank $d-1$ over an algebra of dimension $d$ over an algebraically closed field $k$ is cancellative is still open. As opposed to the case of projective modules of rank $d$, the full result is not an immediate consequence of the fact that $R^{d-1}$ is cancellative. We hope to treat this question somewhere in further work.

\subsection{Conventions} Every ring is commutative with $1$, any algebra is of
finite type over some field. For any abelian group $G$ and any $n\in\N$, we denote by $\{n\}G$ the subgroup of $n$-torsion elements in $G$. All the fields considered are of
characteristic different from $2$. If $X$ is a scheme and $x_p\in
X^{(p)}$, we denote by $\mathfrak m_p$ the maximal ideal in
$\O_{X,x_p}$ and by $k(x_p)$ its residue field. Finally $\omega_{x_p}$
will denote the dual of the $k(x_p)$-vector space $\wedge^p(\mathfrak
m_p/\mathfrak m_p^2)$ (which is one-dimensional if $X$ is regular at
$x_p$). If $M$ and $N$ are square matrices, we denote by $M\perp N$ the matrix $\begin{pmatrix} M & 0 \\ 0 & N\end{pmatrix}$.

\subsection{Acknowledgements}
The first author wishes to thank the Swiss National Science Foundation, grant $\mathrm{PP00P2}\_129089/1$, for support.

\section{Grothendieck-Witt groups}\label{gw}

\subsection{Preliminaries}\label{gwgroups}
In this section, we recall a few basic facts about Grothendieck-Witt groups. These are a modern version of Hermitian $K$-theory. The general reference here is the work of M. Schlichting (\cite{Schlichting09}, \cite{Schlichting09bis}). Since we use Grothendieck-Witt groups only for affine schemes, we restrict to this case.

Let $R$ be a ring with $2\in R^{\times}$. Let $\mathcal P(R)$ be the category of finitely generated projective $R$-modules and $Ch^b(R)$ be the category of bounded complexes of objects in $\mathcal P(R)$. It carries the structure of an exact category, by saying that an exact sequence of complexes is exact if it is exact in $\mathcal P(R)$ degreewise. For any line bundle $L$ on $R$, the duality $\homm R{\_}L$ on $\mathcal P(R)$ induces a duality $\sharp_L$ on $Ch^b(R)$ and the canonical identification of a projective module with its double dual gives a natural isomorphism of functors $\varpi_L:1\to \sharp_L\sharp_L$. One can also define a weak-equivalence in $Ch^b(R)$ to be a quasi-isomorphism of complexes. This shows that $(Ch^b(R),qis,\sharp_L,\varpi_L)$ is an \emph{exact category with weak-equivalences and duality} in the sense of \cite[\S 2.3]{Schlichting09bis} (see also [loc. cit., \S 6.1]). The translation functor (to the left) $T:Ch^b(R)\to Ch^b(R)$ yields new dualities $\sharp^n_L:=T^n\circ \sharp_L$ and canonical isomorphisms $\varpi^n_L:=(-1)^{n(n+1)/2}\varpi_L$. 

To any exact category with weak-equivalences and duality, Schlichting associates a space $\mathcal{GW}$ and defines the (higher) Grothendieck-Witt groups to be the homotopy groups of that space (\cite[\S 2.11]{Schlichting09bis}). More precisely:

\begin{defin} 
We denote by $GW_i^j(R,L)$ the group $\pi_i\mathcal{GW}(Ch^b(R),qis,\sharp^j_L,\varpi^j_L)$. If $L=R$, we simply put $GW_i^j(R)=GW_i^j(R,R)$.
\end{defin}

Of course, the Grothendieck-Witt groups coincide with Hermitian $K$-theory (\cite{Karoubi}, \cite{Karoubi80}), at least when $2\in R^\times$ (\cite[remark 4.16]{Schlichting09}, see also \cite{Horn}). In particular, $GW_i^0(R)=K_iO(R)$ and $GW_i^2(R)=K_iSp(R)$. The twisted groups coincide with Karoubi $U$ groups, i.e. $GW_i^1(R)={}_{-1}U_i(R)$ and $GW_i^3(R)=U_i(R)$.

The basic tool in the study of Grothendieck-Witt groups is the \emph{fundamental exact sequence} (\cite{Karoubi80}, \cite[Definition 3.6]{Horn}; see also \cite[Theorem 8]{FS}):
$$\xymatrix@C=1.4em{\ldots\ar[r] & GW_i^j(R)\ar[r]^-f & K_i(R)\ar[r]^-H & GW_i^{j+1}(R)\ar[r]^-\delta & GW_{i-1}^j(R)\ar[r]^-f & K_{i-1}(R)\ar[r] & \ldots}$$
The fundamental exact sequence ends with Witt groups as defined by Balmer in \cite{Balmer}, see \cite[Lemma 4.13]{Schlichting09} or \cite[Theorem 9]{FS}:
$$\xymatrix@C=1.5em{\ldots\ar[r] & GW^i(R)\ar[r]^-f & K_0(R)\ar[r]^-H & GW^{i+1}(R)\ar[r] & W^{i+1}(R)\ar[r] & 0.}$$
The homomorphism $f$ is called the \emph{forgetful} homomorphism and the homomorphism $H$ is called the \emph{hyperbolic} homomorphism.
The object of the next section is to give some basic computations of Grothendieck-Witt groups of fields that will be used later.

\subsection{Basic computations} 

\begin{lem}\label{gw1}
For any field $F$ of characteristic different from $2$, the groups $GW_i^{i+1}(F)$ are trivial for $i=0,1,2$. 
\end{lem}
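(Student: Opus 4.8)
The plan is to compute each of the three groups $GW_0^1(F)$, $GW_1^2(F)$, $GW_2^3(F)$ directly from the fundamental exact sequence, using known values of the $K$-theory of a field together with known low-degree Grothendieck-Witt and Witt group computations. In each case the strategy is the same: isolate $GW_i^{i+1}(F)$ between two terms of the long exact sequence
$$\cdots \to GW_i^i(F) \xrightarrow{f} K_i(F) \xrightarrow{H} GW_i^{i+1}(F) \xrightarrow{\delta} GW_{i-1}^i(F) \xrightarrow{f} K_{i-1}(F) \to \cdots$$
and show the outer maps force the middle group to vanish, i.e.\ that $H$ is zero (equivalently $f$ is surjective one step to the left) and that $\delta$ is zero (equivalently $f$ is injective one step to the right, at $GW_{i-1}^i(F)$).

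For $i=0$: here $GW_0^1(F) = {}_{-1}U_0(F)$ sits in $K_0(F) \xrightarrow{H} GW_0^1(F) \to W^1(F) \to 0$, the tail of the Witt-group sequence. Since $F$ is a field, $W^1(F)=0$ (the odd-indexed Balmer Witt groups of a field vanish), so $GW_0^1(F)$ is a quotient of $K_0(F)=\Z$ via the hyperbolic map; one then checks $H\colon K_0(F)\to GW_0^1(F)$ is zero, which follows because in the skew-symmetric (twisted) setting the generator $[F]\in K_0(F)$ maps to the class of the hyperbolic form, and for the relevant duality this class is already trivial in $GW_0^1$ — alternatively, $GW_0^1(F)=W^1(F)=0$ directly if one uses that $GW^1(F)\to K_0(F)$ is surjective. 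For $i=1$: $GW_1^2(F)=K_1Sp(F)$, and the sequence gives $GW_1^1(F)\xrightarrow{f} K_1(F)\xrightarrow{H} K_1Sp(F)\xrightarrow{\delta} GW_0^1(F)$. By the $i=0$ computation the right-hand term vanishes, so $\delta=0$ and $K_1Sp(F)$ is the image of $H$; then one uses that $K_1Sp(F)$ of a field is trivial (equivalently $Sp_{2n}(F)=ESp_{2n}(F)$, a classical fact since symplectic groups over a field are generated by elementary symplectic matrices), so $GW_1^2(F)=0$. For $i=2$: $GW_2^3(F)=U_2(F)$ fits in $K_2(F)\xrightarrow{H} GW_2^3(F)\xrightarrow{\delta} GW_1^2(F)$; the right term is $0$ by the $i=1$ case, so it suffices to show $H\colon K_2(F)\to GW_2^3(F)$ is zero, equivalently that $f\colon GW_2^2(F)=K_2Sp(F)\to K_2(F)$ is surjective. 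This last surjectivity is the substantive input — it is the statement that the forgetful map $K_2Sp(F)\to K_2(F)$ hits everything, which can be deduced from Suslin's and others' computations of $K_2$ of fields in the symplectic setting (or from the fact that $K_2(F)$ is generated by symbols $\{a,b\}$, each of which lifts to $K_2Sp$).

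I expect the main obstacle to be pinning down precisely which classical computation to cite for each vanishing, and making sure the identifications $GW_i^{i+1}$ with the Karoubi/Hermitian groups are compatible with the way the fundamental sequence is indexed here. The cleanest route is probably inductive in $i$: establish $GW_0^1(F)=0$ from Witt-group vanishing, feed that into the sequence at $i=1$ to reduce to $K_1Sp(F)=0$, and feed that into $i=2$ to reduce to surjectivity of the forgetful map $K_2Sp(F)\to K_2(F)$. The only genuinely nontrivial ingredient beyond formal manipulation of the exact sequence is this last surjectivity (and the analogous, easier $K_1$ statement), so I would either cite Suslin–Vaserstein / Karoubi for it or give a one-line argument via generation of $K_2(F)$ by Steinberg symbols that manifestly come from the symplectic group.
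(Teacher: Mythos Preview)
Your approach is essentially the same as the paper's: both use the fundamental exact sequence and reduce the three cases to (i) $W^1(F)=0$ together with surjectivity of the rank map $GW(F)\to K_0(F)$ for $i=0$, (ii) the classical vanishing $K_1Sp(F)=0$ for $i=1$, and (iii) surjectivity of the forgetful map $K_2Sp(F)\to K_2(F)$ for $i=2$. The paper invokes Suslin's fiber-product description of $K_2Sp(F)$ over $I^2(F)$ for the last point and Balmer--Walter for $W^1(F)=0$; your sketch of lifting Steinberg symbols is the same fact in different clothing. One small slip in your write-up: in the $i=0$ case the map whose surjectivity kills $H$ is $f\colon GW(F)=GW_0^0(F)\to K_0(F)$, not ``$GW^1(F)\to K_0(F)$''.
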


\begin{proof}
First observe that by definition $GW_2^2(F)=K_2Sp(F)$. Recall from \cite[Corollary 6.4]{Suslin} that there is a homomorphism $K_2Sp(F)\to I^2(F)$ such that the following diagram is a fibre product
$$\xymatrix{K_2Sp(F)\ar[r]\ar[d]_-f & I^2(F)\ar[d] \\
K_2(F)\ar[r] & I^2(F)/I^3(F)}$$
where the bottom horizontal map is the homomorphism defined by Milnor in \cite[Theorem 4.1]{Milnor69} and the right vertical map is the quotient map.
It follows that the forgetful map $GW_2^2(F)\to K_2(F)$ is surjective and the exact sequence
$$\xymatrix{GW^2_2(F)\ar[r]^-f & K_2(F)\ar[r]^-H & GW_2^3(F)\ar[r] & GW_1^2(F)}$$
shows that it suffices to prove that $GW_1^2(F)=0$ to have $GW_2^3(F)=0$. This is clear since $GW_1^2(F)=K_1Sp(F)=0$. To conclude, we are left to prove that $GW_0^1(F)=0$. But this is clear since the sequence
$$\xymatrix{GW(F)\ar[r]^-f & K_0(F)\ar[r]^-H & GW_0^1(F)\ar[r] & W^1(F)}$$
is exact and $W^1(F)=0$ (\cite[Proposition 5.2]{BW}).
\end{proof}

\begin{lem}\label{gw2}
The hyperbolic functor $K_i(F)\to GW_i^{i+2}(F)$ is an isomorphism for $i=0,1$. 
\end{lem}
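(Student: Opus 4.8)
The plan is to use the fundamental exact sequence in the same spirit as Lemma \ref{gw1}, but shifted by one in the duality index, together with the vanishing results just established. Concretely, for a fixed $i\in\{0,1\}$ I would write down the relevant portion of the fundamental exact sequence with $j=i+1$, namely
$$\xymatrix@C=1.3em{GW_i^{i+1}(F)\ar[r]^-f & K_i(F)\ar[r]^-H & GW_i^{i+2}(F)\ar[r]^-\delta & GW_{i-1}^{i+1}(F)\ar[r]^-f & K_{i-1}(F).}$$
The hyperbolic map $H\colon K_i(F)\to GW_i^{i+2}(F)$ is exactly the map in question, so it suffices to show that the two outer terms $GW_i^{i+1}(F)$ and $GW_{i-1}^{i+1}(F)$ vanish (the first gives injectivity of $H$, the second surjectivity).

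For $i=1$ this is immediate: the left-hand term is $GW_1^2(F)=K_1Sp(F)=0$, and the right-hand term is $GW_0^2(F)=K_0Sp(F)$... — here one must be slightly careful, since $GW_0^2(F)=K_0Sp(F)$ need not vanish. Instead I would use that the term to the right is $GW_{0}^{2}(F)$ sitting in the Witt-group tail $K_0(F)\xrightarrow{H}GW_0^{2}(F)\to W^2(F)\to 0$; since over a field every symplectic space is a sum of hyperbolic planes, $W^2(F)=0$ and $H\colon K_0(F)\to GW_0^2(F)$ is surjective, which combined with $\delta\colon GW_1^3(F)\to GW_0^2(F)$ — no: cleaner is to invoke directly that $GW_0^2(F)=K_0Sp(F)\cong\Z$ is generated by the hyperbolic plane, so the forgetful-hyperbolic composite shows the connecting map out of $GW_1^3(F)$ lands in the kernel of $f$, hence is zero. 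Thus for $i=1$ the sequence $0=GW_1^2(F)\to K_1(F)\xrightarrow{H}GW_1^3(F)\xrightarrow{\delta}GW_0^2(F)$ forces $H$ injective, and the vanishing of $\delta$ forces it surjective, giving the isomorphism. For $i=0$ the left-hand term is $GW_0^1(F)=0$ by Lemma \ref{gw1}, giving injectivity, and surjectivity of $H\colon K_0(F)\to GW_0^2(F)$ again follows from $W^2(F)=0$ via the Witt-group tail of the fundamental sequence.

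The main obstacle is the bookkeeping at the right-hand end: one must correctly identify the groups $GW_0^2(F)=K_0Sp(F)$ and $GW_{-1}^{*}$-type or Witt-group terms, and argue that the relevant connecting map is zero rather than merely that a vanishing input group exists. The clean way around this is to reduce everything to the statement $W^2(F)=0$, which holds for every field of characteristic $\neq 2$ because every nondegenerate alternating form is metabolic; I would cite this (e.g. via \cite{Balmer} or the standard computation of shifted Witt groups of fields) and feed it into the tail $K_0(F)\xrightarrow{H}GW^2(F)\to W^2(F)\to 0$ of the fundamental sequence. Everything else is a diagram chase through the fundamental exact sequence using Lemma \ref{gw1}.
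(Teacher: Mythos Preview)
Your argument is correct and is essentially the one the paper has in mind: for $i=0$ the paper simply declares the isomorphism $K_0(F)\to K_0Sp(F)$ ``obvious and well known'' (every symplectic space over a field is a sum of hyperbolic planes, equivalently $W^2(F)=0$), and for $i=1$ it invokes the fundamental exact sequence together with Lemma~\ref{gw1}, which is exactly the chase you carry out. Your only extra wrinkle is that you spell out why the connecting map $\delta\colon GW_1^3(F)\to GW_0^2(F)$ vanishes via injectivity of $f\colon K_0Sp(F)\hookrightarrow K_0(F)$; the paper leaves this implicit, presumably regarding it as part of the ``obvious'' $i=0$ statement.
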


\begin{proof}
This is obvious and well known for $GW^2(F)=K_0Sp(F)$. For $GW_1^3(F)$, it suffices to use the fundamental exact sequence and Lemma \ref{gw1} 
\end{proof}

\begin{lem}\label{gw3}
The forgetful functor induces surjections $f:GW_i^{i-1}(F)\to \{2\}K_i(F)$ for $i=1,2$. Moreover, if $F$ is algebraically closed then $f:GW_1^0(F)\to \{\pm 1\}$ is an isomorphism. 
\end{lem}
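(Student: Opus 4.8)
The plan is to exploit the fundamental exact sequence relating $GW$-theory to ordinary $K$-theory together with the already-established vanishing results of Lemma \ref{gw1} and the isomorphisms of Lemma \ref{gw2}. First I would treat the surjectivity statements for $i=1,2$. Consider the relevant piece of the fundamental exact sequence
$$\xymatrix@C=1.4em{GW_i^{i}(F)\ar[r]^-f & K_i(F)\ar[r]^-H & GW_i^{i+1}(F)\ar[r]^-\delta & GW_{i-1}^{i-1}(F)\ar[r]^-f & K_{i-1}(F).}$$
Wait — more precisely, to get $\{2\}K_i(F)$ into the picture I would look at the sequence indexed so that $GW_i^{i-1}(F)$ appears, namely
$$\xymatrix@C=1.4em{GW_{i}^{i-1}(F)\ar[r]^-f & K_{i}(F)\ar[r]^-H & GW_{i}^{i}(F).}$$
The key observation is that the composite $H\circ f$ and $f\circ H$ on $K_i$ are multiplication by $2$ (up to sign), or rather that $f\circ H = 1 + (\text{duality involution})$; after identifying via Lemma \ref{gw2} the relevant $GW$-group with $K_i(F)$, the hyperbolic map $H: K_i(F)\to GW_i^{i}(F)$ becomes, again up to the canonical identifications, multiplication by $2$ on $K_i(F)$ (since $H$ followed by forgetting is $1+\langle -1\rangle = 2$ on $K_i$, and the involution is trivial here). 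Hence $\ker H = \{2\}K_i(F)$, and exactness at $K_i(F)$ gives that $f: GW_i^{i-1}(F)\to K_i(F)$ surjects onto $\ker H = \{2\}K_i(F)$. This handles the first claim for $i=1,2$; I should double-check the $i=2$ case uses $GW_2^2 = K_2Sp$ and the fibre-product description from Lemma \ref{gw1}'s proof to see that $H$ is indeed multiplication by $2$ there.

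For the second claim, take $F$ algebraically closed and examine
$$\xymatrix@C=1.4em{GW^{-1}(F)\ar[r] & K_0(F)\ar[r]^-H & GW_0^0(F)\ar[r]^-{f}\ar@{}[r] & \cdots}$$
no — the right sequence is
$$\xymatrix@C=1.4em{GW_1^0(F)\ar[r]^-f & K_1(F)\ar[r]^-H & GW_1^1(F)\ar[r]^-\delta & GW_0^0(F)\ar[r]^-f & K_0(F).}$$
Since $F$ is algebraically closed, $K_1(F)=F^\times$ is divisible, so $\{2\}K_1(F)$ could be nontrivial only via $2$-torsion in $F^\times$, which is $\{\pm 1\}$. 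Actually the cleaner route: $GW_1^0(F)=K_1O(F)=O(F)/[\,\cdot\,]$, and over an algebraically closed field the determinant gives $GW_1^0(F) \cong \{\pm1\}$ directly, with $f$ landing in $\{2\}K_1(F)=\{2\}F^\times=\{\pm1\}$ compatibly. So the map $f:GW_1^0(F)\to\{\pm1\}$ is surjective by the first part; I then need injectivity, which follows because $GW_1^0(F)$ is itself of order at most $2$ — this can be seen from the exact sequence above using that $GW_1^1(F)=0$ (Lemma \ref{gw1}, $i=1$) forces $H:K_1(F)\to GW_1^1(F)$ to be zero, hence $f:GW_1^0(F)\to K_1(F)$ is surjective onto $\ker H = K_1(F)$... that is too big, so instead I use the known computation $GW_1^0$ of an algebraically closed field via the spinor norm / Dickson invariant.

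The main obstacle I anticipate is pinning down exactly that the hyperbolic and forgetful composites realize "multiplication by $2$" with the correct identifications in Lemma \ref{gw2}, i.e. getting the sign and the duality involution right so that $\ker H$ is genuinely $\{2\}K_i(F)$ and not something twisted; the symplectic case $i=2$ with $GW_2^2(F)=K_2Sp(F)$ is where the care from the Suslin fibre-square in Lemma \ref{gw1} will be needed. The algebraically-closed refinement for $i=1$ then reduces to the elementary fact that $GW_1^0(F)$ of an algebraically closed field is cyclic of order $2$, detected by the forgetful map into $\{\pm1\}\subset F^\times$, which I will cite or verify by hand.
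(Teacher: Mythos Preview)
Your approach to the first claim is essentially the same as the paper's: both use the exact sequence
\[
GW_i^{i-1}(F)\xrightarrow{f} K_i(F)\xrightarrow{H} GW_i^i(F)
\]
and identify the image of $f$ with $\ker H$, which is $\{2\}K_i(F)$ via the relation $f\circ H = 2$ together with information about $f:GW_i^i(F)\to K_i(F)$ coming from the proof of Lemma~\ref{gw1}. One slip: you invoke Lemma~\ref{gw2} to identify ``the relevant $GW$-group'' $GW_i^i(F)$ with $K_i(F)$, but Lemma~\ref{gw2} concerns $GW_i^{i+2}(F)$, not $GW_i^i(F)$. You partially recover by noting that the $i=2$ case needs the fibre square from Lemma~\ref{gw1}'s proof; that is indeed the correct input. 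The $i=1$ case still needs an argument that $H(-1)=0$ in $GW_1^1(F)$, which you do not supply.

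For the second claim the two arguments genuinely diverge. The paper does \emph{not} compute $K_1O(F)$ directly; instead it runs the fundamental exact sequence one step further to the left (with upper index shifted by $4$-periodicity):
\[
GW_1^3(F)\xrightarrow{f} K_1(F)\xrightarrow{H} GW_1^0(F)\xrightarrow{\delta} GW^3(F)\xrightarrow{f} K_0(F)\xrightarrow{H} GW(F).
\]
Using Lemma~\ref{gw2} (now correctly, since $GW_1^3$ appears) one extracts a short exact sequence
\[
0\to K_1(F)/2\to GW_1^0(F)\to GW^3(F)\to 0,
\]
and then concludes from $K_1(F)/2=0$ and the citation $GW^3(F)\cong\Z/2$. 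Your route via the sequence containing $GW_1^1(F)$ fails exactly where you noticed: Lemma~\ref{gw1} gives $GW_1^2(F)=0$, not $GW_1^1(F)=0$, and in fact $GW_1^1(F)$ is nonzero in general, so that sequence will not bound $|GW_1^0(F)|$. Your fallback to a direct determinant/spinor-norm computation of $K_1O(F)\cong\{\pm1\}$ for $F$ algebraically closed is a legitimate alternative, but as written it is an assertion rather than an argument; you would still need to show that stably the commutator subgroup of $O(F)$ is all of $SO(F)$, which is a separate (if standard) fact not available from the lemmas already in the paper.
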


\begin{proof}
For any field $F$, the fundamental exact sequence
$$\xymatrix{GW_i^{i-1}(F)\ar[r]^-f & K_i(F)\ar[r]^-H & GW_i^i(F)}$$
and the proof of Lemma \ref{gw1} yield surjective homomorphisms 
$$GW_i^{i-1}(F)\to \{2\}K_i(F)$$ 
for $i=1,2$. Suppose now that $F$ is algebraically closed. The fundamental exact sequence
$$\xymatrix@C=1.3em{GW_1^3(F)\ar[r]^-f & K_1(F)\ar[r]^-H & GW_1^0(F)\ar[r] & GW^3(F)\ar[r]^-f & K_0(F)\ar[r]^-H & GW(F)}$$
and Lemma \ref{gw2} give an exact sequence
$$\xymatrix{0\ar[r] & K_1(F)/2\ar[r] & GW_1^0(F)\ar[r] & GW^3(F)\ar[r] & 0}.$$
Since $F$ is algebraically closed, the left term is trivial. Now $GW^3(F)=\Z/2$ by \cite[Lemma 4.1]{FS2}.
\end{proof}

\section{The elementary symplectic Witt group}\label{elementary}

Let $R$ be a ring (with $2\in R^\times$). We first briefly recall the definition of the elementary symplectic Witt group considered in \cite{SV}. For any $n\in \N$, let $S^\prime_{2n}(R)$ be the set of skew-symmetric matrices in $GL_{2n}(R)$. For any $r\in \N$, Let $\psi_{2r}$ be the matrix defined inductively by 
$$\psi_2:=\begin{pmatrix}   0 & 1\\ -1 & 0\end{pmatrix}$$
and $\psi_{2r}:=\psi_2\perp \psi_{2r-2}$. Observe that $\psi_{2r}\subset S^\prime_{2r}$. For any $m<n$, there is an obvious inclusion of $S^\prime_{2m}(R)$ in $S^\prime_{2n}(R)$ defined by $G\mapsto G\perp \psi_{2n-2m}$. We define $S^\prime(R)$ as the set $\cup S^\prime_{2n}(R)$. There is an equivalence relation on $S^\prime(R)$ defined as follows:

If $G\in S^\prime_{2n}(R)$ and $G^\prime\in S^\prime_{2m}(R)$, then $G\sim G^\prime$ if and only if there exists $t\in\N$ and $E\in E_{2(m+n+t)}(R)$ such that 
$$G\perp \psi_{2(m+t)}=E^t(G^\prime\perp \psi_{2(n+t)})E.$$
Observe that $S^\prime(R)/\sim$ has the  structure of an abelian group, with $\perp$ as operation and $\psi_2$ as neutral element. One can also consider the subsets $S_{2n}(R)\subset S^\prime_{2n}(R)$ of invertible skew-symmetric matrices with Pfaffian equal to $1$. If $S(R):=\cup S_{2n}(R)$, it is easy to see that $\sim$ induces an equivalence relation on $S(R)$ and that $S(R)/\sim$ is also a group. 

\begin{defin}
We denote by $W^\prime_E(R)$ the group $S^\prime(R)/\sim$ and by $W_E(R)$ the group $S(R)/\sim$. The latter is called \emph{elementary symplectic Witt group}.
\end{defin}

\subsection{An exact sequence}
In this section, we prove a few basic facts  about the elementary symplectic Witt group and obtain a useful exact sequence (see \cite[\S 2]{Fa09}). If $G$ is a skew-symmetric invertible matrix of size $2n$, it can be seen as a skew-symmetric form on $R^{2n}$. This yields a map $\varphi_{2n}:S^\prime_{2n}(R)\to \Ko R$, where the latter is the reduced symplectic $K_0$ of the ring $R$. This map passes to the limit and preserves the equivalence relation $\sim$ (as well as the orthogonal sum $\perp$). Hence we get a homomorphism 
$$\varphi:W^\prime_E(R)\to \Ko R.$$
We define a map
$$\eta_{2n}:GL_{2n}(R)\to S^\prime(R)$$ 
by $\eta_{2n}(G)=G^t \psi_{2n}G$ for any $G\in GL_{2n}(R)$, and a map 
$$\eta_{2n+1}:GL_{2n+1}(R)\to S^\prime(R)$$  
by $\eta_{2n+1}(G)=(G\perp 1)^t \psi_{2(n+1)}(G\perp 1)$ for any $G\in GL_{2n+1}(R)$. These maps obviously pass to the limit, and we obtain a map $\eta:GL(R)\to W^\prime_E(R)$ after composing with the projection $S^\prime(R)\to W^\prime_E(R)$. Using Whitehead Lemma (\cite[proof of Lemma 2.5]{Milnor71}), it is not hard to 
see that $\eta$ induces a homomorphism
$$\eta:K_1(R)\to W^\prime_E(R).$$

\begin{prop}\label{exactII}
The sequence
$$\xymatrix{K_1Sp(R)\ar[r]^-{f^\prime} & K_1(R)\ar[r]^-\eta & W^\prime_E(R)\ar[r]^-\varphi & \Ko R\ar[r]^-f & \widetilde K_0(R)}$$
is exact, where $f^\prime$ is the homomorphism induced by $Sp_{2n}\subset SL_{2n}$ for any $n$, $f$ is the forgetful homomorphism and $\widetilde K_0(R)$ is the reduced $K_0$ of $R$.
\end{prop}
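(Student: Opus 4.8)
The plan is to prove exactness separately at each of the three interior terms $K_1(R)$, $W^\prime_E(R)$ and $\Ko R$, in each case first observing that the relevant composite vanishes and then chasing a kernel element down to an explicit matrix.

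\textbf{Exactness at $K_1(R)$.} That $\eta\circ f^\prime=0$ is immediate: if $g\in Sp_{2n}(R)$ then $\eta_{2n}(g)=g^t\psi_{2n}g=\psi_{2n}$, which is the neutral element of $W^\prime_E(R)$. Conversely, given $g\in GL(R)$ with $\eta([g])=0$, I may assume $g\in GL_{2n}(R)$ for some $n$ (replacing $g$ by $g\perp 1$ if necessary, which alters neither its class in $K_1(R)$ nor $\eta([g])$). Then $\eta_{2n}(g)=g^t\psi_{2n}g\sim\psi_2$, and unwinding the definition of $\sim$ produces $s\in\N$, $N=n+s$ and $E\in E_{2N}(R)$ with $(g\perp I_{2s})^t\psi_{2N}(g\perp I_{2s})=E^t\psi_{2N}E$. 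Hence $(g\perp I_{2s})E^{-1}\in Sp_{2N}(R)$, and as $E$ is trivial in $K_1(R)$ we conclude $[g]=f^\prime\big([(g\perp I_{2s})E^{-1}]\big)$.

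\textbf{Exactness at $W^\prime_E(R)$.} For $G\in S^\prime_{2n}(R)$ the skew form $\eta_{2n}(G)=G^t\psi_{2n}G$ is congruent to $\psi_{2n}$, so the symplectic space it defines is isometric to $(R^{2n},\psi_{2n})$ and hence $\varphi\circ\eta=0$. For the reverse inclusion, let $G\in S^\prime_{2n}(R)$ with $\varphi([G])=0$, i.e. the class of $(R^{2n},G)$ vanishes in the reduced symplectic $K_0$. Unwinding this: equality in the group completion of the symmetric monoidal category of symplectic spaces is witnessed by a common orthogonal summand, and combining this with $(Q,\theta)\perp(Q,-\theta)\cong(Q\oplus Q^*,\mathrm{hyp})$ and the fact that every finitely generated projective is a summand of a free module, one upgrades the vanishing to an \emph{isometry} $(R^{2n},G)\perp(R^{2m},\psi_{2m})\cong(R^{2N},\psi_{2N})$ with $N=n+m$. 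In matrix terms this reads $h^t(G\perp\psi_{2m})h=\psi_{2N}$ for some $h\in GL_{2N}(R)$, i.e. $G\perp\psi_{2m}=\eta_{2N}(h^{-1})$; since $[G]=[G\perp\psi_{2m}]$ in $W^\prime_E(R)$ this gives $[G]=\eta([h^{-1}])$.

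\textbf{Exactness at $\Ko R$, and the main obstacle.} For $G\in S^\prime_{2n}(R)$, $f(\varphi([G]))$ is the class of the free module $R^{2n}$ in $\widetilde K_0(R)$, hence $0$, so $f\circ\varphi=0$. Conversely, represent $x\in\ker f$ by a single symplectic space $(P,\theta)$ modulo free hyperbolic spaces, with $P$ a summand of a free module; then $f(x)=0$ forces $P$ to be stably free, so adding a suitable $(R^{2m},\psi_{2m})$ (and at most one further hyperbolic plane to correct parity) makes the underlying module free of even rank $2N$, whence the sum has the form $(R^{2N},G)$ with $G\in S^\prime_{2N}(R)$ and $\varphi([G])=x$. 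The only genuinely delicate step is the reverse inclusion at $W^\prime_E(R)$ (and, to a lesser degree, at $\Ko R$): one must descend from an abstract vanishing in the reduced symplectic $K_0$ group to a concrete congruence of skew-symmetric invertible matrices over $R$. This relies on the cancellation property of group completions, on stabilizing an arbitrary symplectic space to a free hyperbolic one, and on careful bookkeeping with the $\perp\psi_{2\ast}$ stabilizations so that the resulting invertible matrix is recognised as an $\eta$-value; everything else is a routine diagram chase.
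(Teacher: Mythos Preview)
Your argument is correct and follows essentially the same route as the paper's own proof: verify the sequence is a complex, then chase a kernel element at each of the three interior spots down to an explicit matrix identity (stabilizing by $\psi_{2m}$ or $I_{2s}$ as needed). The only differences are cosmetic---you treat the three spots in the opposite order and spell out in more detail the cancellation/stabilization step at $W^\prime_E(R)$ that the paper compresses into the single sentence ``adding if necessary some multiple of $\psi_2$, we have $M=G^t\psi_{2n}G$''.
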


\begin{proof} 
First, a straightforward computation shows that this sequence is a complex. We now check that it is exact.

It is well known that any element of $\Ko R$ is of the form $[P,\phi]$ for some projective module $P$ and some skew-symmetric form $\phi:P\to P^\vee$ (\cite[Proposition 2]{Bass75}). But $f(P,\phi)=0$ if and only if $P$ is stably free and adding a suitable $H(R^s)$ we see that $P$ is given by a skew-symmetric matrix on some $R^{2n}$. 

Let $M$ be a skew-symmetric (invertible) matrix such that $\varphi(M)=0$. Adding if necessary some multiple of $\psi_2$, we have $M=G^t\psi_{2n}G$ for some $n\in \N$ and therefore $M=\eta(G)$. 

If $G\in K_1(R)$ is such that $\eta(G)=0$, we can suppose that $G\in GL_{2n}(R)$ for some $n\in\N$ and the triviality of $\eta(G)$ is expressed as $G^t\psi_{2n}G\sim \psi_2$. This translates as
$$\psi_{2(n+s)}=E^t((G^t\psi_{2n}G)\perp\psi_{2s})E=E^t((G\perp I_{2s})^t\psi_{2(n+s)}(G\perp I_{2s}))E$$
for some $s\in\N$ and some $E\in E_{2(n+s)}(R)$. Therefore $(G\perp I_{2s})E\in Sp_{2(n+s)}(R)$. 
\end{proof}


\section{Identification with $GW^3_1(R)$}\label{ident}

\subsection{The group $GW^3_1(R)$}
Recall from Section \ref{gwgroups} that $GW_1^3(R)$ is defined to be the group $\pi_1\mathcal{GW}(Ch^b(R),qis,\sharp^3,\varpi^3)$. 

Consider the category $\mathcal {SF}(R)$ of stably-free $R$-modules of finite rank and its subcategory $\mathcal {F}(R)$ of free $R$-modules of finite rank. Let $Ch_{\mathrm{sf}}^b(R)$ (resp. $Ch_{\mathrm{free}}^b(R)$)  be the category $Ch_{\mathrm{sf}}^b(R)$ of bounded complexes of objects of $\mathcal {SF}(R)$ (resp. $\mathcal{F}(R)$). Restricting the duality, the canonical isomorphism and the quasi-isomorphisms to $Ch_{\mathrm{sf}}^b(R)$ and $Ch_{\mathrm{free}}^b(R)$, we see that both categories are also exact categories with weak-equivalences and duality. If we denote by $GW^3_{1,{\mathrm{sf}}}(R)$ the Grothendieck-Witt group obtained from the category $Ch_{\mathrm{sf}}^b(R)$ and $GW^3_{1,{\mathrm{free}}}(R)$ the Grothendieck-Witt group obtained from the category $Ch_{\mathrm{free}}^b(R)$, then the natural map $Ch_{\mathrm{sf}}^b(R)\to Ch^b(R)$ induces an isomorphism 
$$GW^3_{1,{\mathrm{sf}}}(R)\to GW^3_1(R)$$
by \cite[Theorem 7]{Schlichting09bis}, while the natural map $Ch_{\mathrm{free}}^b(R)\to Ch_{\mathrm{sf}}^b(R)$ induces an isomorphism 
$$GW^3_{1,{\mathrm{free}}}(R)\to GW^3_{1,{\mathrm{sf}}}(R)$$
by \cite[Lemma 9]{Schlichting09bis}.

Recall moreover that the group $GW^3_1(R)$ fits in the fundamental exact sequence
$$\xymatrix{K_1Sp(R)\ar[r]^-{f^\prime} & K_1(R)\ar[r]^-H & GW^3_1(R)\ar[r]^-\delta & K_0Sp(R)\ar[r]^-f & K_0(R)}$$
which gives an exact sequence
$$\xymatrix{K_1Sp(R)\ar[r]^-{f^\prime} & K_1(R)\ar[r]^-H & GW^3_1(R)\ar[r]^-\delta & \Ko R\ar[r]^-f & \widetilde K_0(R)}$$
\subsection{The group V$(R)$}
Consider the set of triples $(L,f_0,f_1)$, where $L$ is a free module and $f_i:L\simeq L^\vee$ are skew-symmetric isomorphisms ($i=0,1$). Two triples $(L,f_0,f_1)$ and $(L^\prime,f^\prime_0,f^\prime_1)$ are isometric if there exists an isomorphism $\alpha:L\to L^\prime$ such that $f_i=\alpha^\vee f^\prime_i \alpha$ for $i=0,1$. We denote by $[L,f_0,f_1]$ the isometry class of a triple $(L,f_0,f_1)$. The orthogonal sum
$$[L,f_0,f_1]\perp [L^\prime,f^\prime_0,f^\prime_1]:=[L\oplus L^\prime, f_0\perp f^\prime_0,f_1\perp f^\prime_1]$$
endows the set MV$^\prime(R)$ of isometry classes of triples $[L,f_0,f_1]$ with a structure of a monoid and we can consider its Grothendieck group $V^\prime(R)$. We denote by $V(R)$ the quotient of $V^\prime(R)$ by the subgroup generated by the relations
$$[L,f_0,f_1]+[L,f_1,f_2]-[L,f_0,f_2].$$
The group $V(R)$ is naturally isomorphic to $GW^3_{1,{\mathrm{free}}}(R)$ by \cite{Schlichting09}, and the latter is isomorphic to $GW^3_1(R)$ as seen above.

\subsection{The isomorphism}

Following \cite[\S3]{SV}, we define for any $n\in \N$ a matrix $\sigma_{2n}:R^{2n}\to R^{2n}$ inductively by
\[
\sigma_2:=\begin{pmatrix} 0 & 1 \\ 1 & 0\end{pmatrix}
\]
and $\sigma_{2n}:=\sigma_2\perp\sigma_{2n-2}$. Observe that $\sigma_{2n}^2=Id_{2n}$.

Let $(L,f_0,f_1)$ be a triple as in the above section, with $L$ free. 
Choose an isomorphism $g:R^{2n}\to L$ and consider the isomorphism 
$$(g^\vee f_1g)\perp \sigma_{2n}(g^\vee f_0g)^{-1}\sigma_{2n}^\vee:R^{2n}\oplus (R^{2n})^\vee \to (R^{2n})^\vee\oplus R^{2n}.$$
It is clearly skew-symmetric and we can consider its class in $W^\prime_E(R)$. 

\begin{lem}\label{invariant}
The class of $(g^\vee f_1g)\perp \sigma_{2n}(g^\vee f_0g)^{-1}\sigma_{2n}^\vee$ in $W^\prime_E(R)$ does not depend on $g$.
\end{lem}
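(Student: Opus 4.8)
The plan is to show that any two choices $g, g': R^{2n} \to L$ (after possibly stabilizing to a common size, which we may do by adding copies of the hyperbolic form $\psi_2$ and replacing $g$ by $g \perp \mathrm{id}$) yield the same class in $W'_E(R)$. Since $g$ and $g'$ are both isomorphisms from $R^{2n}$ to the free module $L$, the composite $h := g^{-1}g' \in GL_{2n}(R)$, and the two prescriptions are related by the substitution $g \mapsto g' = gh$. So the whole statement reduces to the following: for $h \in GL_{2n}(R)$, the matrix
$$
((gh)^\vee f_1 (gh)) \perp \sigma_{2n}((gh)^\vee f_0 (gh))^{-1}\sigma_{2n}^\vee
$$
is equivalent under $\sim$ to the matrix obtained with $g$ in place of $gh$.

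The key computation is that $(gh)^\vee f_1 (gh) = h^\vee (g^\vee f_1 g) h$, and similarly $((gh)^\vee f_0 (gh))^{-1} = h^{-1}(g^\vee f_0 g)^{-1}(h^\vee)^{-1}$; conjugating the second block by $\sigma_{2n}$ replaces the action of $h$ by the action of $\sigma_{2n}^\vee h^{-1} \sigma_{2n}$-type terms. Writing $A := g^\vee f_1 g$ and $B := g^\vee f_0 g$ (both invertible skew-symmetric), one finds that the new matrix is congruent, via the block-diagonal matrix $h \perp (\sigma_{2n}(h^\vee)^{-1}\sigma_{2n}^\vee)$, to $A \perp \sigma_{2n}B^{-1}\sigma_{2n}^\vee$. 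The point is then that this congruence is realized by an element of $GL_{4n}(R)$ of a very special form: a block-diagonal matrix $\mathrm{diag}(h, \sigma h^{-t}\sigma)$, which by Whitehead's lemma becomes elementary after one further stabilization by a hyperbolic block, since such a matrix together with its inverse-transpose-type partner is a product of elementary matrices up to stabilization. Concretely, $\mathrm{diag}(h, h^{-t}) \in E(R)$ after stabilization (this is the standard Whitehead-lemma fact already invoked in the construction of $\eta$ in Proposition \ref{exactII}), and conjugation by the fixed permutation-type matrix $\sigma_{2n}$ does not affect membership in the elementary subgroup after stabilizing. Hence the congruence lies in $E_{2(2n+t)}(R)$ for some $t$, which is exactly the relation defining $\sim$, and the two classes in $W'_E(R)$ coincide.

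I would organize the write-up as: (i) reduce to comparing $g$ and $gh$ for $h \in GL_{2n}(R)$ after a harmless stabilization; (ii) perform the block congruence computation exhibiting the explicit conjugating matrix $h \perp \sigma_{2n}(h^{\vee})^{-1}\sigma_{2n}^\vee$; (iii) invoke Whitehead's lemma to see this conjugating matrix becomes elementary after stabilization, so the two skew-symmetric matrices are $\sim$-equivalent.

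The main obstacle is step (iii): one must be careful that the conjugating matrix really has the form required by Whitehead's lemma. The issue is that it is $h$ on one block and $\sigma_{2n}(h^\vee)^{-1}\sigma_{2n}^\vee$ on the other, rather than literally $h \perp h^{-1}$; one needs the observation that $\sigma_{2n}$ is a fixed element of $E_{2n}(R)$ (or at least of $GL_{2n}(R)$ with $\mathrm{diag}(\sigma_{2n},\sigma_{2n}) \in E_{4n}(R)$ after stabilization), together with the fact that $(h^\vee)^{-1}$ differs from $h^{-1}$ only by a transpose, which is absorbed by the same Whitehead argument since $\mathrm{diag}(M, M^{-t})$ is stably elementary for any $M \in GL(R)$. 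Keeping track of the dualities $\sharp_L$ versus the naive transpose — i.e. the identifications $R^{2n} \cong (R^{2n})^\vee$ implicit in writing $\sigma_{2n}^\vee$ and $g^\vee$ — is the one place where signs and conventions could go wrong, and I would state once and for all which identification is being used before beginning the computation.
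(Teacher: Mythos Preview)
Your outline is the same as the paper's up through step (ii): both reduce to showing that congruence by a block-diagonal matrix of the form $C=m\perp \sigma_{2n}^\vee(m^{-1})^\vee\sigma_{2n}^\vee$ (with $m=h^{-1}g$) acts trivially in $W'_E(R)$. The divergence is in step (iii).

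You want to deduce directly that $C$ is stably elementary via Whitehead's lemma. But Whitehead gives $\mathrm{diag}(M,M^{-1})\in E_{2n}(R)$, whereas your $C$ pairs $m$ with (a $\sigma$-conjugate of) $(m^{\vee})^{-1}=(m^t)^{-1}$, not $m^{-1}$. So what you actually need is $[m]=[m^t]$ in $K_1(R)$, and this is \emph{not} Whitehead's lemma; your sentence ``$\mathrm{diag}(M,M^{-t})$ is stably elementary for any $M\in GL(R)$'' is asserted without proof. That is exactly the point that has to be supplied, and it is precisely where the paper does something different.

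The paper's device (adapted from \cite[Lemma 4.1.3]{Barge08}) is to \emph{add} the hyperbolic skew form $h_{4n}=\begin{pmatrix}0 & \sigma_{2n}^\vee\\ -\sigma_{2n} & 0\end{pmatrix}$, which is $\sim\psi_{4n}$, and observe that $h_{4n}$ is preserved under congruence by $D:=m^{-1}\perp\sigma_{2n}^\vee m^\vee\sigma_{2n}^\vee$. Hence the enlarged congruence matrix is $C\perp D=m\perp \sigma_{2n}^\vee(m^{-1})^\vee\sigma_{2n}^\vee\perp m^{-1}\perp \sigma_{2n}^\vee m^\vee\sigma_{2n}^\vee$. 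Now blocks $1$ and $3$ give $m\perp m^{-1}\in E$, and blocks $2$ and $4$ give (after harmless conjugation by $\sigma_{2n}^\vee\perp\sigma_{2n}^\vee$ and using normality of $E$) $(m^{-1})^\vee\perp m^\vee\in E$, the transpose of an elementary matrix. So $C\perp D\in E_{8n}(R)$ by a genuine application of Whitehead, with no appeal to $[m]=[m^t]$.

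In short: your plan is correct in spirit, but the specific claim that $\mathrm{diag}(M,M^{-t})$ is stably elementary is the whole content of the lemma, and calling it ``Whitehead's lemma'' glosses over the issue you yourself flag as the main obstacle. The paper's trick of adding $h_{4n}$ is exactly the missing ingredient that turns the transpose into an honest inverse so that Whitehead applies.
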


\begin{proof}
If $h:R^{2n}\to L$ is another isomorphism, let $a:= h^{-1}g\perp \sigma_{2n}^\vee(g^{-1}h)^\vee \sigma_{2n}^\vee$. Then we have
$$(g^\vee f_1g)\perp (-g^\vee f_0g)^{-1}=a^\vee [(h^\vee f_1h)\perp \sigma_{2n}(h^\vee f_0h)^{-1}\sigma_{2n}^\vee]a.$$
It is then sufficient to prove that the conjugation by $m\perp \sigma_{2n}^\vee(m^{-1})^\vee\sigma_{2n}^\vee$ is trivial in $W^\prime_E(R)$ for any automorphism $m$ of $R^{2n}$. To achieve this, we adapt the arguments of \cite[Lemma 4.1.3]{Barge08}.

Let $h_{4n}:R^{2n}\oplus (R^{2n})^\vee\to (R^{2n})^\vee\oplus R^{2n}$ be given by the matrix $\begin{pmatrix} 0 & \sigma_{2n}^\vee \\ -\sigma_{2n} & 0\end{pmatrix}$. We have 
$$\begin{pmatrix} (m^{-1})^\vee & 0\\0 & \sigma_{2n}m\sigma_{2n}\end{pmatrix}\cdot h_{4n}\cdot \begin{pmatrix} m^{-1} & 0\\ 0 & \sigma_{2n}^\vee m^\vee\sigma_{2n}^\vee \end{pmatrix}=h_{4n}.$$
Adding $h_{4n}$ to $(h^\vee f_1h)\perp \sigma_{2n}(h^\vee f_0h)^{-1}\sigma_{2n}^\vee$, we see therefore that we have to prove that $m\perp  \sigma_{2n}^\vee(m^{-1})^\vee\sigma_{2n}^\vee\perp  m^{-1}\perp \sigma_{2n}^\vee m^\vee\sigma_{2n}^\vee$ is an elementary matrix. This is clear by Whitehead Lemma (\cite[proof of Lemma 2.5]{Milnor71}).
\end{proof}

We set $\zeta(L,f_0,f_1)$ to be the class of $(g^\vee f_1g)\perp \sigma_{2n}(g^\vee f_0g)^{-1}\sigma_{2n}^\vee$ in $W^\prime_E(R)$ for any isomorphism $g:R^{2n}\to L$.
Next we deal with isometries:

\begin{lem}
If $(L,f_0,f_1)$ and $(L^\prime,f_0^\prime,f_1^\prime)$ are isometric, then we have an equality $\zeta(L,f_0,f_1)=\zeta(L^\prime,f^\prime_0,f^\prime_1)$ in $W^\prime_E(R)$.
\end{lem}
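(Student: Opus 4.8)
The plan is to unwind the definitions of isometry and of $\zeta$ and to exploit the fact that $\zeta$ is already known, by Lemma \ref{invariant}, to be independent of the chosen trivialization $g:R^{2n}\to L$. So suppose $\alpha:L\to L'$ is an isometry, meaning $f_i=\alpha^\vee f_i'\alpha$ for $i=0,1$. Fix an isomorphism $g:R^{2n}\to L$; then $g':=\alpha g:R^{2n}\to L'$ is an isomorphism as well. First I would compute $\zeta(L',f_0',f_1')$ using this particular $g'$: by Lemma \ref{invariant} this is legitimate, and it equals the class of $((g')^\vee f_1' g')\perp \sigma_{2n}((g')^\vee f_0' g')^{-1}\sigma_{2n}^\vee$ in $W^\prime_E(R)$.

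The key step is then the observation that $(g')^\vee f_i' g' = g^\vee \alpha^\vee f_i' \alpha g = g^\vee f_i g$ for $i=0,1$, using the isometry relation. Hence the skew-symmetric matrix $((g')^\vee f_1' g')\perp \sigma_{2n}((g')^\vee f_0' g')^{-1}\sigma_{2n}^\vee$ is literally equal to $(g^\vee f_1 g)\perp \sigma_{2n}(g^\vee f_0 g)^{-1}\sigma_{2n}^\vee$, whose class in $W^\prime_E(R)$ is $\zeta(L,f_0,f_1)$ by definition. Therefore $\zeta(L',f_0',f_1')=\zeta(L,f_0,f_1)$, as desired.

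Honestly, there is no real obstacle here: the only thing to be careful about is that an isometry $\alpha$ need not identify a chosen trivialization of $L$ with a chosen trivialization of $L'$, so one cannot directly compare the representatives built from two independently chosen $g$ and $h$; this is precisely why the previous lemma (independence of $g$) must be invoked first, to license using the transported trivialization $\alpha g$ on the $L'$ side. Once that is in place the argument is a one-line substitution. I would also remark in passing that this lemma, together with Lemma \ref{invariant}, shows that $\zeta$ descends to a well-defined map on isometry classes, and that it is visibly compatible with $\perp$, which is what will be needed to promote $\zeta$ to a homomorphism $V'(R)\to W'_E(R)$ in the sequel.
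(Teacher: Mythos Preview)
Your proof is correct and is essentially identical to the paper's own argument: choose a trivialization $g$ of $L$, transport it via the isometry to $g'=\alpha g$, and use $\alpha^\vee f_i'\alpha=f_i$ to identify the two representatives. The paper states this in one line without spelling out the appeal to Lemma~\ref{invariant}, but your more explicit version adds nothing new beyond that.
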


\begin{proof}
If $g:R^{2n}\to L$ is an isomorphism, and $\alpha:L\to L^\prime$ is an isometry then $\alpha g:R^{2n}\to L^\prime$ is an isomorphism and the result follows from $\alpha^\vee f_i^\prime\alpha=f_i$ for $i=0,1$.
\end{proof}

Hence we see that the class $\zeta([L,f_0,f_1])$ is well defined. It is easy to check that it respects the direct sum, and thus we obtain a homomorphism
$$\zeta:V^\prime(R)\to W^\prime_E(R).$$
\begin{prop}
The homomorphism $\zeta:V^\prime(R)\to W^\prime_E(R)$ induces a homomorphism
$$\zeta:V(R)\to W^\prime_E(R).$$
\end{prop}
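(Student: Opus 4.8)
The plan is to show that the homomorphism $\zeta\colon V^\prime(R)\to W^\prime_E(R)$ annihilates the subgroup $N\subseteq V^\prime(R)$ generated by the elements $[L,f_0,f_1]+[L,f_1,f_2]-[L,f_0,f_2]$, so that it factors through $V(R)=V^\prime(R)/N$. Fix such a generator, choose an isomorphism $g\colon R^{2n}\to L$, and set $a_i:=g^\vee f_ig\in S^\prime_{2n}(R)$; since $\sigma:=\sigma_{2n}$ is symmetric, $\sigma^\vee$ is again the matrix $\sigma$, and the definition of $\zeta$ gives $\zeta([L,f_i,f_j])=[a_j\perp\sigma a_i^{-1}\sigma]$ in $W^\prime_E(R)$. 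Because $W^\prime_E(R)$ is abelian we may reorder orthogonal summands, so
$$\zeta([L,f_0,f_1])+\zeta([L,f_1,f_2])=[a_1\perp\sigma a_0^{-1}\sigma]+[a_2\perp\sigma a_1^{-1}\sigma]=[a_1\perp\sigma a_1^{-1}\sigma]+[a_2\perp\sigma a_0^{-1}\sigma],$$
and the last term is $\zeta([L,f_0,f_2])$. Thus the whole statement reduces to the single claim: for every skew-symmetric invertible matrix $M$ of even size $2n$ one has $[M\perp\sigma_{2n}M^{-1}\sigma_{2n}]=0$ in $W^\prime_E(R)$.

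For this I would produce an explicit trivialising change of basis. Using that $2\in R^\times$, a direct computation shows that
$$G:=\begin{pmatrix} I_{2n} & \tfrac12\,M^{-1}\\[3pt] \sigma_{2n}M & -\tfrac12\,\sigma_{2n}\end{pmatrix}\in GL_{4n}(R),\qquad G^{t}\,(M\perp\sigma_{2n}M^{-1}\sigma_{2n})\,G=\bigl(\begin{smallmatrix}0 & I_{2n}\\ -I_{2n} & 0\end{smallmatrix}\bigr);$$
geometrically, the columns of the two blocks of $G$ span complementary Lagrangians, so this form is metabolic. Let $P\in GL_{4n}(R)$ be the permutation matrix reordering the $4n$ standard basis vectors so that $P^{t}\psi_{4n}P=\bigl(\begin{smallmatrix}0 & I_{2n}\\ -I_{2n} & 0\end{smallmatrix}\bigr)$; its underlying permutation has sign $(-1)^{n}$. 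Then $M\perp\sigma_{2n}M^{-1}\sigma_{2n}=(PG^{-1})^{t}\psi_{4n}(PG^{-1})=\eta_{4n}(PG^{-1})$, so by Proposition \ref{exactII} its class in $W^\prime_E(R)$ vanishes as soon as $[PG^{-1}]$ lies in the image of $f'\colon K_1Sp(R)\to K_1(R)$. In fact $[PG^{-1}]=0$ in $K_1(R)$: elementary row and column operations reduce $G$ to $\mathrm{diag}(I_{2n},-\sigma_{2n})$, so $[G]=[-\sigma_{2n}]$; since $-I_{2n}$ is a product of elementary matrices and $\sigma_{2n}=\sigma_2\perp\cdots\perp\sigma_2$ with $[\sigma_2]$ the class of $-1\in R^\times$, the class $[G]$ is $n$ times the class of $-1$, and since $P$ has sign $(-1)^n$ and even permutation matrices are products of elementary matrices, $[P]$ is likewise $n$ times the class of $-1$. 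Hence $[PG^{-1}]=[P]-[G]=0$, which proves the claim and the proposition.

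The only genuinely delicate point is this last computation in $K_1(R)$. One is tempted to identify $\bigl(\begin{smallmatrix}0 & I_{2n}\\ -I_{2n} & 0\end{smallmatrix}\bigr)$ with $\psi_{4n}$ outright, but in $W^\prime_E(R)$ this fails when $n$ is odd: its class equals $\eta$ applied to the class of $-1\in R^\times$, which need not vanish because elements of $K_1Sp(R)$ have determinant $1$. What rescues the argument is that the factor carried by the $\sigma$'s inside the trivialising matrix $G$ cancels exactly the factor carried by the shuffle permutation $P$, so careful bookkeeping of these signs in $K_1(R)$ is essential. Everything else — the identity $G^{t}(M\perp\sigma_{2n}M^{-1}\sigma_{2n})G=\bigl(\begin{smallmatrix}0 & I_{2n}\\ -I_{2n} & 0\end{smallmatrix}\bigr)$ and the reduction of $G$ to $\mathrm{diag}(I_{2n},-\sigma_{2n})$ — is a routine verification of the kind already used in the proofs of Lemma \ref{invariant} and Proposition \ref{exactII}.
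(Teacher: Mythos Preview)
Your argument is correct and follows exactly the same reduction as the paper: both compute $\zeta([L,f_0,f_1])+\zeta([L,f_1,f_2])$, reorder the orthogonal summands, and reduce to showing that $M\perp\sigma_{2n}M^{-1}\sigma_{2n}$ represents the trivial class in $W^\prime_E(R)$ for every invertible skew-symmetric $M$.

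The only difference is in how that last point is justified. The paper dispatches it in one line by citing \cite[\S3]{SV}, where it is shown that $\sigma_{2n}M^{-1}\sigma_{2n}^\vee$ is the $\perp$-inverse of $M$. You instead give a self-contained verification: an explicit hyperbolic splitting $G$ with $G^{t}(M\perp\sigma_{2n}M^{-1}\sigma_{2n})G=\bigl(\begin{smallmatrix}0&I\\-I&0\end{smallmatrix}\bigr)$, followed by a careful $K_1$ bookkeeping to show the resulting change-of-basis matrix is stably elementary. Your attention to the parity of $n$ (the $\sigma$-factor in $G$ versus the shuffle permutation $P$) is exactly the content that the reference to \cite{SV} hides, and your cancellation $[P]-[G]=n[-1]-n[-1]=0$ is correct because $2[-1]=0$ in $K_1(R)$. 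So your route is not genuinely different, just more explicit; what it buys is independence from the external reference, at the cost of a longer argument.
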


\begin{proof}
By definition of $\zeta$, it suffices to check that 
$$\zeta([A^{2n},f_0,f_1])+\zeta([A^{2n},f_1,f_2])=\zeta([A^{2n},f_0,f_2])$$
in $W^\prime_E(R)$. The left-hand term is equal to
$$f_1\perp \sigma_{2n} f_0^{-1}\sigma_{2n}^\vee\perp f_2\perp \sigma_{2n}f_1^{-1}\sigma_{2n}^\vee$$
and it suffices therefore to prove that $f_1\perp \sigma_{2n}f_1^{-1}\sigma_{2n}^\vee$ is elementary equivalent to some $\psi_{2n}$ to conclude. Since $\sigma_{2n}f_1^{-1}\sigma_{2n}^\vee$ is precisely the inverse of $f_1$ in $W_E(R)$ by \cite[\S3]{SV}, we are done.
\end{proof}

It remains to prove that $\zeta$ is an isomorphism, but all the work is almost done. 

\begin{thm}\label{main}
Let $R$ be a ring with $2\in R^\times$. The following diagram commutes
$$\xymatrix{K_1(R)\ar[r]^-H\ar@{=}[d] & V(R)\ar[r]^-\delta\ar[d]_-\zeta & \widetilde K_0Sp(R)\ar@{=}[d] \\
K_1(R)\ar[r]_-\eta & W^\prime_E(R)\ar[r]_-\varphi & \widetilde K_0Sp(R)}$$
and $\zeta$ is an isomorphism.
\end{thm}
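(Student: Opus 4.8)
The plan is to establish the two claims of Theorem \ref{main} in sequence: first that the displayed diagram commutes, and then that $\zeta$ is an isomorphism by a diagram chase using the two exact sequences already in hand.

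For commutativity of the left-hand square, I would start from an element of $K_1(R)$ represented by $m\in GL_{2n}(R)$ and compute $H(m)$ explicitly in $V(R)$: the hyperbolic form construction gives the triple $(R^{2n}\oplus(R^{2n})^\vee, h, h\cdot(\text{twist by }m))$ where $h$ is the standard hyperbolic skew form, so $\zeta H(m)$ is by definition the class of a matrix of the shape $(g^\vee f_1 g)\perp\sigma(g^\vee f_0 g)^{-1}\sigma^\vee$ with the $f_i$ the hyperbolic data. On the other side, $\eta(m)=m^t\psi_{2n}m$ up to the stabilization convention. So the content is an explicit matrix identity showing these two classes in $W^\prime_E(R)$ agree; this should follow from the same Whitehead-lemma manipulations used in Lemma \ref{invariant}, together with the fact (from \cite[\S3]{SV}) that $\sigma_{2n}f^{-1}\sigma_{2n}^\vee$ is the $W_E$-inverse of $f$. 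For the right-hand square, I would unwind the boundary map $\delta$ of the fundamental exact sequence for $GW^3_1$, which on a class $[L,f_0,f_1]$ with $L$ free returns the symplectic space $[L,f_0]-[L,f_1]$ (or the stably free module underlying it) in $\widetilde K_0Sp(R)$; comparing with $\varphi\zeta([L,f_0,f_1])=[\text{skew form }(g^\vee f_1 g)\perp\sigma(g^\vee f_0g)^{-1}\sigma^\vee]$ and using that the second summand represents $-[L,f_0]$, one gets exactly $[L,f_1]-[L,f_0]$, matching $\delta$ up to the sign conventions, which I would fix once and for all.

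Granting commutativity, the isomorphism statement is then a five-lemma-style argument. Both rows are pieces of exact sequences: the bottom row is (part of) the exact sequence of Proposition \ref{exactII}, the top row is (part of) the fundamental exact sequence for $GW^3_1$ recalled in \S\ref{ident}; the outer vertical maps are identities (on $K_1(R)$ and on $\widetilde K_0Sp(R)$). To run the five lemma I need the rows to extend one more step on each side with comparable terms: on the right both sequences continue with $\widetilde K_0Sp(R)\xrightarrow{f}\widetilde K_0(R)$ identically, and on the left both continue with $K_1Sp(R)\xrightarrow{f'}K_1(R)$ — here I must check the square with $K_1Sp(R)$ also commutes, i.e. that the composite $K_1Sp(R)\to K_1(R)\xrightarrow{\eta}W^\prime_E(R)$ agrees with $K_1Sp(R)\to K_1(R)\xrightarrow{H}V(R)\xrightarrow{\zeta}W^\prime_E(R)$, which again reduces to the definition of $\eta$ on symplectic matrices (it is visibly zero on $Sp$, matching $H f'=0$). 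Once the two long exact sequences are aligned in a commuting ladder with identities at four of the five spots, the five lemma gives that $\zeta$ is an isomorphism.

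The main obstacle I anticipate is bookkeeping of sign and stabilization conventions: the paper carries several incompatible-looking normalizations ($\varpi^n_L=(-1)^{n(n+1)/2}\varpi_L$, the $\sigma_{2n}$ versus $\psi_{2n}$ forms, the $-1$ appearing in the proof of Lemma \ref{invariant} in $(-g^\vee f_0 g)^{-1}$, and the stabilization $G\mapsto G\perp\psi_{2r}$), and the commutativity of both squares is only true on the nose after these are reconciled. So I would budget most of the effort for a careful, once-and-for-all choice of conventions under which $H$, $\eta$, $\delta$, $\varphi$ are all normalized compatibly, after which both squares commute by direct computation and the five lemma finishes the proof. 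The diagram chase itself, and the identification of $\zeta H$ with $\eta$ and of $\varphi\zeta$ with $\delta$, are then routine given the lemmas already proved in this section.
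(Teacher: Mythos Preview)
Your proposal is correct and follows essentially the same route as the paper: verify commutativity of both squares by direct computation, then invoke the five lemma against Proposition \ref{exactII} and the fundamental exact sequence. The one place you overestimate the work is the left square: once you use the concrete description $H(G)=[R^{2n},\psi_{2n},G^\vee\psi_{2n}G]$ and the elementary identity $\sigma_{2n}\psi_{2n}^{-1}\sigma_{2n}^\vee=\psi_{2n}$, the class $\zeta H(G)$ is literally $G^\vee\psi_{2n}G\perp\psi_{2n}=\eta(G)$ with no Whitehead manipulations needed, so the sign-and-convention bookkeeping you budget for is in fact minimal.
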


\begin{proof}
We first prove that the diagram commutes. Let $a\in K_1(R)$ be represented by a matrix $G\in {\rm GL}_{2n}(R)$. Then $H$ is given by $H(a)=[A^{2n},\psi_{2n},G^\vee\psi_{2n}G]$ (see for instance \cite{Karoubi}). Since $\sigma_{2n}(\psi_{2n})^{-1}\sigma_{2n}^\vee=\psi_{2n}$, the left hand square commutes. If $[L,f_0,f_1]$ is in $V(R)$, then $\delta([L,f_0,f_1])=[L,f_1]-[L,f_0]$ in $\Ko R$. On the other hand, 
$$
-[L,f_0]=[R^{2n},-g^\vee f_0 g]=[(R^{2n})^\vee,(g^\vee f_0 g)^{-1}]=[(R^{2n})^\vee,\sigma_{2n}(g^\vee f_0 g)^{-1}\sigma_{2n}^\vee]
$$
and the right-hand square also commutes.

It follows then from Proposition \ref{exactII} and the five lemma that $\zeta$ is an isomorphism.
\end{proof}

This theorem shows that $W^\prime_E(R)$ inherits all the good properties of $GW_1^3(R)$.


\section{The Gersten-Grothendieck-Witt spectral sequence}\label{ggw}
Let $X$ be a regular scheme. Recall from \cite[Theorem 25]{FS} that the Gersten-Grothendieck-Witt spectral sequence is a spectral sequence $E(n)$ defined for any $n\in \Z$ converging to $GW^n_{n-*}(X)$ with terms on page $1$ of the form (recall our conventions about $\omega_{x_p}$):
$$E(n)_1^{p,q}= \displaystyle{\bigoplus_{x_p\in X^{(p)}} GW_{n-p-q}^{n-p}(k(x_p),\omega_{x_p})}.$$
By construction of the corresponding spectral sequences, the forgetful homomorphism and the hyperbolic homomorphism induce morphisms of spectral sequences between the Gersten-Grothendieck-Witt spectral sequence and the Brown-Gersten-Quillen spectral sequence in $K$-theory.

\begin{prop}\label{presentation}
Let $X$ be a smooth affine threefold over an algebraically closed field $k$. Then the Gersten-Grothendieck-Witt spectral sequence $E(3)^{p,q}$ yields an isomorphism
$$W_E(X)\simeq H^2(X,K_3).$$
\end{prop}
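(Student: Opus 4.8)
The plan is to exploit the comparison between the Gersten-Grothendieck-Witt spectral sequence $E(3)$ and the Brown-Gersten-Quillen spectral sequence, combined with the explicit knowledge of Grothendieck-Witt groups of fields obtained in Section \ref{gw}. By Theorem \ref{main}, we have an isomorphism $W_E(X) \simeq GW_1^3(X)$, and the spectral sequence $E(3)$ converges to $GW^3_{3-*}(X)$, so $GW_1^3(X) = GW^3_{3-2}(X)$ is the $* = 2$ piece. Thus the first step is to write down the $E_1$-page in the relevant range: the terms $E(3)_1^{p,q} = \bigoplus_{x_p \in X^{(p)}} GW_{3-p-q}^{3-p}(k(x_p), \omega_{x_p})$ for $p = 0,1,2,3$ (since $X$ is a threefold) and the total degree $n - p - q$ with $n = 3$ fixed so that contributions to $GW^3_1(X)$ come from $p + q = 2$.

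Next I would evaluate these field-level Grothendieck-Witt groups using Lemmas \ref{gw1}, \ref{gw2}, \ref{gw3}. For a point $x_p$ of codimension $p$, the residue field $k(x_p)$ has transcendence degree $3-p$ over $k$, hence cohomological dimension $3-p$, but more importantly the groups $GW_i^j(k(x_p))$ in the relevant range are controlled: $GW_0^1(F) = 0$, $GW_1^2(F) = K_1Sp(F) = 0$, the hyperbolic maps $K_i(F) \to GW_i^{i+2}(F)$ are isomorphisms for $i = 0,1$, and the forgetful maps onto $2$-torsion. The key point is that the groups of the form $GW_{n-p}^{n-p}(F)$ and $GW_{n-p-1}^{n-p}(F)$ that would obstruct the computation vanish or are $2$-torsion, and after inverting nothing but using that we land in $H^2$, the only surviving contribution assembles into the $K$-theoretic Gersten complex computing $H^2(X, K_3)$. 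Concretely, I expect the column $p = 2$ to contribute $\bigoplus_{x_2} GW_{3-2-q}^{1}(k(x_2), \omega_{x_2})$ and the relevant differentials to match those of the BGQ spectral sequence via the forgetful/hyperbolic morphism of spectral sequences, so that the $E_2$ (or $E_\infty$) term in position $(2, 0)$ is exactly $H^2(X, K_3)$ as computed by Gersten's resolution. Edge maps and the fact that $X$ is affine of dimension $3$ (so the spectral sequence is concentrated in a narrow band and degenerates appropriately) finish the identification.

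The main obstacle will be bookkeeping the degenerate entries and the differentials: one must check that all the potentially interfering terms $E(3)_1^{p,q}$ with $p + q = 2$ other than the one giving $H^2(X,K_3)$ actually vanish (using the field computations), and that the incoming and outgoing differentials on the surviving term coincide with the Gersten differentials for $K_3$. This requires knowing that the forgetful map $GW_i^j(F) \to K_i(F)$ or the hyperbolic map is an isomorphism in precisely the spots where the two spectral sequences must be compared — which is exactly the content of Lemmas \ref{gw2} and \ref{gw3} for finitely generated field extensions of an algebraically closed field. A secondary subtlety is the twist by $\omega_{x_p}$: since $X$ is smooth these are line bundles over the residue fields and the Grothendieck-Witt groups of fields are insensitive to such twists up to isomorphism (a one-dimensional form is determined up to a unit, and over a field squares are plentiful), so the twist does not affect the vanishing statements, but this should be remarked.

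Finally I would assemble: the spectral sequence $E(3)$, truncated and with the vanishing entries removed, has a single nonzero contribution to $GW^3_1(X)$ sitting in the position whose $E_2$-term is the cohomology of the Gersten complex $\bigoplus_{x_0} K_3(k(x_0)) \to \bigoplus_{x_1} K_2(k(x_1)) \to \bigoplus_{x_2} K_1(k(x_2)) \to \bigoplus_{x_3} K_0(k(x_3))$ at the spot computing $H^2(X, K_3)$; combined with Theorem \ref{main} this gives $W_E(X) \simeq GW_1^3(X) \simeq H^2(X, K_3)$, completing the proof.
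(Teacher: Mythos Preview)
Your outline has the right architecture, but there are two genuine gaps that would prevent the argument from closing.

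First, Theorem \ref{main} identifies $GW_1^3(X)$ with $W^\prime_E(X)$, \emph{not} with $W_E(X)$. The difference between the two is precisely the Pfaffian, and it shows up in the spectral sequence: the term $E(3)^{0,2}$ does not vanish. Its $E_\infty$-contribution is $\O_X(X)^\times$ (computed via the hyperbolic isomorphism of Lemma \ref{gw2} as the degree-$0$ homology of $K_1(k(X))\to\bigoplus_{x_1}K_0(k(x_1))$), and the edge map $GW_1^3(X)\to E(3)_\infty^{0,2}$ is the Pfaffian. The paper argues that this edge map is split, so its kernel --- which is exactly $W_E(X)$ --- is filtered by the remaining $E_\infty^{p,2-p}$ with $p\geq 1$. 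Your assertion that there is a ``single nonzero contribution to $GW_1^3(X)$'' is therefore incorrect, and without separating off the Pfaffian you cannot get $W_E(X)$ on the nose.

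Second, you omit the term $E(3)^{3,-1}$ entirely. At a closed point $x_3$ the residue field is $k$, and $GW_1^0(k)\simeq\Z/2$ by Lemma \ref{gw3}; this is not zero, so you must show that $E(3)_\infty^{3,-1}=0$. The paper does this by comparing (via the forgetful map) with the cokernel of $\bigoplus_{x_2}\{2\}K_2(k(x_2))\to\bigoplus_{x_3}\{2\}K_1(k(x_3))$, identifying that cokernel with $CH^3(X)/2$ through multiplication by $\{-1\}$, and then invoking the nontrivial fact that $CH^3(X)/2=0$ for an affine threefold over an algebraically closed field. None of this is covered by the field computations you cite.

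A smaller point: for the identification $E(3)_\infty^{2,0}\simeq H^2(X,K_3)$, the forgetful maps $GW_{3-p}^{3-p}(k(x_p))\to K_{3-p}(k(x_p))$ are not isomorphisms on the nose; their kernels are the powers $I^{4-p}(k(x_p))$, and one needs the cohomological-dimension bound $\mathrm{cd}(k(x_p))\leq 3-p$ together with the Milnor conjecture (Orlov--Vishik--Voevodsky, Voevodsky) and the Arason--Pfister Hauptsatz to conclude these vanish. Lemmas \ref{gw2} and \ref{gw3} alone do not suffice here.
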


\begin{proof}
First remark that the line $q=1$ is trivial by Lemma \ref{gw1}. The line $q=2$ is as follows:
$$\xymatrix{GW_1^3(k(X),\omega)\ar[r] & \displaystyle{\bigoplus_{x_1\in X^{(1)}}GW^2(k(x_1),\omega_{x_1}) }\ar[r] & 0}$$
Lemma \ref{gw2} shows that this is isomorphic, via $H$, to 
$$\xymatrix{K_1(k(X))\ar[r] & \displaystyle{\bigoplus_{x_1\in X^{(1)}}K_0(k(x_1))}\ar[r] & 0}$$
whose homology at degree $0$ is just $\O_X(X)^\times$. Now the Pfaffian homomorphism $GW_1^3(X)\to \O_X(X)^\times$ is clearly split, and we see that the kernel of the edge homomorphism $GW_1^3(X)\to E(3)_{\infty}^{0,2}$ is precisely $W_E(X)$. 

We now show that $E(3)_{\infty}^{2,0}\simeq H^2(X,K_3)$. But $E(3)_{\infty}^{2,0}$ is the homology of the complex
$$\xymatrix@C=1.2em{\displaystyle{\bigoplus_{x_1\in X^{(1)}}GW_2^2(k(x_1),\omega_{x_1}) }\ar[r] & \displaystyle{\bigoplus_{x_2\in X^{(2)}}GW_1^1(k(x_2),\omega_{x_2}) }\ar[r] & \displaystyle{\bigoplus_{x_3\in X^{(3)}}GW(k(x_1),\omega_{x_1}) } .}$$
We use the forgetful functor to compare this sequence with the corresponding sequence
$$\xymatrix@C=1.2em{\displaystyle{\bigoplus_{x_1\in X^{(1)}}K_2(k(x_1)) }\ar[r] & \displaystyle{\bigoplus_{x_2\in X^{(2)}}K_1(k(x_2)) }\ar[r] & \displaystyle{\bigoplus_{x_3\in X^{(3)}}K_0(k(x_1)) } }$$
in $K$-theory. After d\'evissage, the forgetful functor and the proof of Lemma \ref{gw1} yield exact sequences 
$$\xymatrix{0\ar[r] & I^{4-p}(F)\ar[r] & GW_{3-p}^{3-p}(F)\ar[r]^-f & K_{3-p}(F)\ar[r] & 0,}$$
for any field $F$ and for $p=1,2,3$. Now if $x_p\in X^{(p)}$, then $cd(k(x_p))\leq 3-p$ by \cite[\S 4.2, Proposition 11]{Serre}. Hence $H^{4-p}(k(x_p),\mu_2)=0$ and the latter is isomorphic to $I^{4-p}(k(x_p))/I^{5-p}(k(x_p))$ by \cite[Theorem 4.1]{OVV} and \cite[Theorem 7.4]{Voevodsky}. The Arason-Pfister Hauptsatz \cite{Arasonpf} then shows that $I^{4-p}(k(x_p))=0$. The forgetful homomorphism therefore induces an isomorphism of complexes between 
$$\xymatrix@C=1.2em{\displaystyle{\bigoplus_{x_1\in X^{(1)}}GW_2^2(k(x_1),\omega_{x_1}) }\ar[r] & \displaystyle{\bigoplus_{x_2\in X^{(2)}}GW_1^1(k(x_2),\omega_{x_2}) }\ar[r] & \displaystyle{\bigoplus_{x_3\in X^{(3)}}GW(k(x_1),\omega_{x_1}) }}$$
and
$$\xymatrix@C=1.2em{\displaystyle{\bigoplus_{x_1\in X^{(1)}}K_2(k(x_1)) }\ar[r] & \displaystyle{\bigoplus_{x_2\in X^{(2)}}K_1(k(x_2)) }\ar[r] & \displaystyle{\bigoplus_{x_3\in X^{(3)}}K_0(k(x_1)) }. }$$

To conclude, we prove that $E(3)_{\infty}^{3,-1}=0$. It suffices to show that the cokernel of the homomorphism 
$$\xymatrix{\displaystyle{\bigoplus_{x_2\in X^{(2)}} GW_2^1(k(x_2),\omega_{x_2})}\ar[r] & \displaystyle{\bigoplus_{x_3\in X^{(3)}} GW_1^0(k(x_3),\omega_{x_3})}}$$
is trivial. Lemma \ref{gw3} yields a commutative diagram
$$\xymatrix{\displaystyle{\bigoplus_{x_2\in X^{(2)}} GW_2^1(k(x_2),\omega_{x_2})}\ar[r]\ar[d]_-f & \displaystyle{\bigoplus_{x_3\in X^{(3)}} GW_1^0(k(x_3),\omega_{x_3})}\ar[d]^-f\\
\displaystyle{\bigoplus_{x_2\in X^{(2)}} \{2\}K_2(k(x_2))}\ar[r] & \displaystyle{\bigoplus_{x_3\in X^{(3)}} \{2\}K_1(k(x_3))}   }$$
in which the left vertical map is surjective and the right vertical map is an isomorphism. Hence both sequences have the same cokernel. 

For any field $F$ and any integer $n\in\N$, define a homomorphism $g_n:K_n(F)/2\to \{2\}K_{n+1}(F)$ by $\alpha\mapsto \{-1\}\cdot \alpha$. It is clear that $g_0$ is an isomorphism, and $g_1$ is surjective by \cite{Suslin87}.

Using the definition of the residue homomorphisms, it is straightforward to check that the diagram
$$\xymatrix{\displaystyle{\bigoplus_{x_2\in X^{(2)}} K_1(k(x_2))/2}\ar[r]\ar[d]_-{\sum g_1} & \displaystyle{\bigoplus_{x_3\in X^{(3)}} K_0(k(x_3))/2}\ar[d]^-{\sum g_0}\\
\displaystyle{\bigoplus_{x_2\in X^{(2)}} \{2\}K_2(k(x_2))}\ar[r] & \displaystyle{\bigoplus_{x_3\in X^{(3)}} \{2\}K_1(k(x_3))}}$$
commutes and therefore the cokernels of the rows are isomorphic. The cokernel of the top homomorphism is $CH^3(X)/2$ which is trivial by \cite[Lemma 1.2]{Colliot}. The result follows.
\end{proof}


\section{Divisibility of $W_E$}\label{div}

In this section, we prove that $W_E(R)$ is divisible for a smooth algebra $R$ of dimension $3$ over an algebraically closed field. Set $X=\spec R$. In view of Proposition \ref{presentation}, it suffices to prove that $H^2(X,K_3)$ is divisible. The idea is to use Bloch-Kato (for $K_2$ and $K_1$) and the Bloch-Ogus spectral sequence.

\begin{prop}\label{h2k3}
Let $X$ be a smooth affine threefold over an algebraically closed field $k$. 
Then $H^2(X,K_3)$ is divisible prime to $\cha k$. 
\end{prop}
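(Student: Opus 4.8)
The plan is to compute $H^2(X,K_3)$ via the coniveau (Gersten/Bloch–Ogus) spectral sequence comparing Quillen $K$-theory with étale cohomology through the Bloch--Kato/Merkurjev--Suslin isomorphisms in low weights, exactly as indicated by the author. Since $X$ is a smooth affine threefold over an algebraically closed field $k$, the sheaf $K_3$ (Zariski sheafification of $U\mapsto K_3(U)$) has a Gersten resolution, so $H^2(X,K_3)$ is the cohomology at the middle spot of
$$\bigoplus_{x_1\in X^{(1)}} K_2(k(x_1))\longrightarrow \bigoplus_{x_2\in X^{(2)}} K_1(k(x_2))\longrightarrow \bigoplus_{x_3\in X^{(3)}} K_0(k(x_3)).$$
First I would fix an integer $n$ prime to $\cha k$ and a primitive root of unity, identifying $\mu_n^{\otimes j}$ with $\Z/n$. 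The goal is to show multiplication by $n$ is surjective on $H^2(X,K_3)$, equivalently that $H^2(X,K_3)\otimes \Z/n = 0$ and, after a right-exactness argument, that the relevant cokernel vanishes.

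The key step is to pass to mod-$n$ coefficients. Tensoring the Gersten complex for $K_3$ with $\Z/n$ is delicate because the terms are not finitely generated, but for $x_p\in X^{(p)}$ we have $\mathrm{cd}(k(x_p))\le 3-p$ (by \cite[\S 4.2, Proposition 11]{Serre}), hence $K_m(k(x_p))/n \cong K^M_m(k(x_p))/n \cong H^m(k(x_p),\mu_n^{\otimes m})$ for all $m\le 3-p$ by the Bloch--Kato conjecture in the relevant degrees (Merkurjev--Suslin for $K_2$, Kummer theory/Suslin rigidity for $K_1$ and $K_0$). Thus the mod-$n$ Gersten complex for $K_3$ becomes, term by term, the Gersten complex computing the mod-$n$ motivic/étale cohomology $H^2(X,\mathcal H^3(\mu_n^{\otimes 3}))$ in the Bloch--Ogus spectral sequence, i.e. the $E_2$-term $H^2_{\mathrm{Zar}}(X,\mathcal H^3_{\mathrm{et}}(\mu_n^{\otimes 3}))$. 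One must be careful that the residue maps match; this is the standard compatibility of Gersten differentials in $K$-theory and étale cohomology, and I would invoke it rather than reprove it. The upshot is an identification $H^2(X,K_3)/n \hookrightarrow H^5_{\mathrm{et}}(X,\mu_n^{\otimes 3})$ (or a direct identification with $E_2^{2,3}$ of Bloch--Ogus), together with a Tor-term that vanishes because the $K_3(k(X))$-term and its image are divisible away from $\cha k$ — one can see $K_3$ of a field of cohomological dimension $\le 2$ is divisible prime to its characteristic using Bloch--Kato in degree $3$, so the leftmost map has divisible source modulo lower-codimension contributions.

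Having reduced to étale cohomology, the final step is a vanishing statement: for $X$ a smooth affine variety of dimension $d$ over an algebraically closed field, $H^i_{\mathrm{et}}(X,\mathcal F)=0$ for $i>d$ and any torsion sheaf $\mathcal F$ (Artin vanishing, \cite[XIV]{SGA4}). With $d=3$ this kills $H^5_{\mathrm{et}}(X,\mu_n^{\otimes 3})=0$, and more precisely $E_2^{2,3}$ of Bloch--Ogus vanishes because $\mathcal H^3_{\mathrm{et}}(\mu_n^{\otimes 3})$ is concentrated in a way that forces $H^2_{\mathrm{Zar}}(X,\mathcal H^3)$ to be a subquotient of $H^5_{\mathrm{et}}(X,\mu_n^{\otimes 3})=0$. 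Hence $H^2(X,K_3)/n=0$ for every $n$ prime to $\cha k$, which is exactly divisibility prime to $\cha k$.

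**Main obstacle.** The delicate point is the term-by-term identification of the mod-$n$ Gersten complex for $K_3$ with the Gersten complex for étale cohomology $\mathcal H^3(\mu_n^{\otimes 3})$: one needs the Bloch--Kato isomorphisms $K^M_m/n\cong H^m_{\mathrm{et}}(\mu_n^{\otimes m})$ to be compatible with residues, and one needs to control the Tor-terms arising when tensoring a non-finitely-generated complex with $\Z/n$ (this is where divisibility of $K_3$ and $K_2$ of the residue fields, again via Bloch--Kato, enters). Once this comparison is in place, the geometric input — Artin's affine vanishing theorem in degree $>\dim X$ — is what actually produces the divisibility, and it is completely standard. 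The author explicitly disclaims originality here, so I would present the argument concisely, citing Merkurjev--Suslin \cite{Merkusus}, Bloch--Kato/Voevodsky, and SGA4 for the three inputs, rather than reproving any of them.
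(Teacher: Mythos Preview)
Your approach is essentially the paper's: reduce to showing $H^2(X,K_3/n)=0$, identify this via Merkurjev--Suslin with the Bloch--Ogus term $H^2_{\mathrm{Zar}}(X,\mathcal H^3(\mu_n^{\otimes 3}))$, and kill the latter using Artin's affine vanishing $H^5_{\mathrm{et}}(X,\mu_n^{\otimes 3})=0$ together with the fact that $E_2^{p,q}=0$ for $q\ge 4$ by the cohomological-dimension bound on residue fields.

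The one place where the paper is sharper than your sketch is the passage to mod-$n$ coefficients. You propose to control the Tor-term by invoking divisibility of $K_3(k(X))$, but that is not the relevant input (and flirts with using what is to be proved in a different degree). The paper instead splits multiplication by $n$ into the two sheaf sequences
\[
0\to \{n\}K_3\to K_3\to nK_3\to 0,\qquad 0\to nK_3\to K_3\to K_3/n\to 0,
\]
and observes that $H^3(X,\{n\}K_3)=0$ simply because $K_0(k(x_3))=\Z$ is torsion-free, so the Gersten complex of $\{n\}K_3$ vanishes in degree $3$. This immediately gives that $H^2(X,K_3)\to H^2(X,nK_3)$ is surjective, and then the second sequence shows that $n\cdot H^2(X,K_3)=H^2(X,K_3)$ as soon as $H^2(X,K_3/n)=0$. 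No divisibility of higher $K$-groups of fields is needed; the torsion-freeness of $K_0$ does all the work.
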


\begin{proof}
Let $l$ be a prime number different from $\cha k$. Consider the exact sequences of sheaves (see \cite[proof of Corollary 1.11]{Bloch} for instance)
$$\xymatrix{0\ar[r] & \{l^n\}K_3\ar[r] & K_3\ar[r] & l^nK_3\ar[r] & 0,}$$
and 
$$\xymatrix{0\ar[r] & l^nK_3\ar[r] & K_3\ar[r] & K_3/l^nK_3\ar[r] & 0.}$$ 
Observe that the Gersten resolutions of $\{l^n\}K_3$ and $K_3/l^nK_3$ are flasque resolutions of the corresponding sheaves (see \cite{Grayson85} for instance), and thus we can use these to compute the corresponding cohomology groups.

Since $K_0(F)=\Z$ for any field $F$, we get $H^3(X,\{l^n\}K_3)=0$. This yields the following diagram where the line and the column are exact:
$$\xymatrix{H^2(X,K_3)\ar@{-->}[rd]_{l^n}\ar[r] & H^2(X,l^nK_3)\ar[r]\ar[d] & 0 \\
& H^2(X,K_3)\ar[d]&  \\
& H^2(X,K_3/l^nK_3)&  }$$
It suffices therefore to prove that $H^2(X,K_3/l^nK_3)=0$ to conclude the divisibility by $l^n$.

For any $q,m\in\N$, let $\mathcal H^q(m)$ be the sheaf associated to the presheaf 
$$U\mapsto H^q_{et}(U,\mu_{l^n}^{\otimes m}).$$ 
The Bloch-Ogus spectral sequence (\cite{Blochogus}) converges to the \'etale cohomology groups $H^{*}_{et}(X,\mu_{l^n}^{\otimes m})$ and its groups at page $2$ are the groups $H^p_{Zar}(X,\mathcal H^q(m))$. These are computed via the Gersten complex
$$\xymatrix{H^q(k(X),\mul nm)\ar[r]^-{d_0} & \displaystyle{\bigoplus_{x_1\in X^{(1)}} H^{q-1}(k(x_1),\mul n{m-1})}\ar[r] & \ldots             }$$
Using again that $cd(k(x_p))\leq 3-p$ for any $x_p\in X^{(p)}$ (\cite[\S 4.2, Proposition 11]{Serre}), we get $H^p_{Zar}(X,\mathcal H^q(m))=0$ for any $q\geq 4$. Since $X$ is affine, this implies $H^2(X,\mathcal H^3(m))=H^5_{et}(X,\mu_{l^n}^{\otimes m})=0$ by \cite[Chapter VI, Theorem 7.2]{Milne}. 

Now there is a commutative diagram (\cite[Theorem 2.3]{Bloch})
$$\xymatrix{\displaystyle{\bigoplus_{x_1\in X^{(1)}} K_2(k(x_1))/l^n}\ar[r]\ar[d] & \displaystyle{\bigoplus_{x_2\in X^{(2)}} K_1(k(x_2))/l^n}\ar[r]\ar[d] & \displaystyle{\bigoplus_{x_3\in X^{(3)}} K_0(k(x_3))/l^n}\ar[d]   \\
\displaystyle{\bigoplus_{x_1\in X^{(1)}} H^2(k(x_1),\mu_{l^n}^{\otimes 2})}\ar[r] & \displaystyle{\bigoplus_{x_2\in X^{(2)}} H^1(k(x_2),\mu_{l^n})}\ar[r] & \displaystyle{\bigoplus_{x_3\in X^{(3)}} H^0(k(x_3),\Z/l^n)}     }$$
whose vertical maps are isomorphisms (\cite{Merkusus}). This gives 
$$H^2(X,K_3/l^n)= H^2(X,\mathcal H^3(3))=0.$$ 
The result follows.
\end{proof}


\section{The main theorem}\label{principal}

In this section, we prove that any stably free module $P$ of rank $(d - 1)$ over a 
$d$-dimensional normal affine algebra $R$ over an algebraically closed field $k$, for 
which $\gcd ((d-1)!, \cha k) = 1$, is indeed free. 

Recall first that a unimodular row of length $n\geq 2$ over $R$ is a row $a=(a_1,\ldots,a_n)$ with $a_i\in R$ such that there exist $b_1,\ldots,b_n\in R$ with $\sum a_ib_i=1$. We denote by $Um_n(R)$ the set of unimodular rows of length $n$ and we consider it as a pointed set with base point $e_1:=(1,0,\ldots,0)$. If $M\in GL_n(R)$ and $a\in Um_n(R)$, then $aM$ is also unimodular, and in this way $GL_n(R)$ 
acts on $Um_n(R)$. If $a$ is any unimodular row, the exact sequence
$$\xymatrix{0\ar[r] & P(a)\ar[r] & R^n\ar[r]^-a & R\ar[r] & 0}$$
yields a projective module $P(a)$ which is free if and only if there exists $M\in GL_n(R)$ such that $aM=e_1$. It is clear that any projective module $P$ such that $P\oplus R\simeq R^n$ comes from a unimodular row, and it suffices therefore to understand the (pointed) set $Um_n(R)/GL_n(R)$ to understand stably free modules of rank $n-1$ satisfying $P\oplus R\simeq R^n$. 

Of course, any subgroup of $GL_n(R)$ acts on $Um_n(R)$, and one is classically interested in the (pointed) orbit set $Um_n(R)/E_n(R)$ which classifies unimodular rows up to elementary homotopies. An orbit in $Um_n(R)/E_n(R)$ is usually called an elementary orbit.

We now recall a lemma of L. N. Vaserstein (\cite[Corollary 2]{Vstein4}).

\begin{lem} \label{vstein}
Let $(a_1, a_2, a_3) \in Um_3(R)$, $u \in R$, and 
$Ra_1 + Ru = R$. Then the rows $(a_1, a_2, a_3)$ and $(a_1, ua_2, ua_3)$ 
are in the same elementary orbit. 
\end{lem}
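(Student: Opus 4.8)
The plan is to prove Lemma \ref{vstein} by an explicit elementary manipulation, reducing the general case to the well-known fact that for a single element one can multiply one coordinate by a unit using elementary transformations, once a suitable companion coordinate is available.

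First I would reduce to the case where $u$ is \emph{itself} unimodular-compatible in a strong sense. Since $Ra_1 + Ru = R$, write $sa_1 + tu = 1$ for some $s,t \in R$. I would then perform elementary column operations on the row $(a_1,a_2,a_3)$ to bring it, within its elementary orbit, to a row whose first coordinate is still $a_1$ but whose remaining structure exposes $u$ as a unit modulo $a_1$. Concretely, working modulo $a_1$ the element $u$ is a unit, so in the ring $R/Ra_1$ the rows $(\bar a_2,\bar a_3)$ and $(u\bar a_2, u\bar a_3)$ differ by the scalar matrix $uI_2$, which has determinant $u^2$; and $u^2$, being a unit square modulo $a_1$, is in $E_2(R/Ra_1)$ up to the Whitehead relation — more precisely $\mathrm{diag}(u,u)$ times $\mathrm{diag}(u^{-1},u^{-1})$-type corrections can be absorbed. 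The key classical input is the Whitehead-type identity $\mathrm{diag}(u, u^{-1}) \in E_2$ for any unit $u$, together with the fact that $\mathrm{diag}(u,u) = \mathrm{diag}(u,u^{-1})\cdot \mathrm{diag}(1,u^2)$, and that $\mathrm{diag}(1,u^2)$ acting on $(\bar a_2,\bar a_3)$ is harmless because $u^2 \bar a_3$ and $\bar a_3$ differ by a multiple handled via the first coordinate $a_1$.

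The cleaner route, which I expect to be the actual argument, is to lift these elementary operations from $R/Ra_1$ back to $R$: an elementary matrix in $E_2(R/Ra_1)$ applied to the last two coordinates lifts to an element of $E_3(R)$ fixing the first coordinate (this is a standard lifting, using that $E_n$ surjects onto $E_{n-1}$ of a quotient when one extra coordinate is available, here the coordinate $a_1$). Thus it suffices to check that $(a_1, a_2, a_3)$ and $(a_1, ua_2, ua_3)$ become elementarily equivalent after reduction modulo $a_1$, and there the difference is multiplication by $\mathrm{diag}(u,u)$ on a unimodular pair $(\bar a_2, \bar a_3) \in Um_2(R/Ra_1)$. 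Using $sa_1 + tu = 1$ one sees $\bar u \in (R/Ra_1)^\times$, and one finishes with the identity $(\bar a_2, \bar a_3)\,\mathrm{diag}(\bar u, \bar u) = (\bar a_2, \bar a_3)\,\mathrm{diag}(\bar u, \bar u^{-1})\,\mathrm{diag}(1, \bar u^2)$, where $\mathrm{diag}(\bar u,\bar u^{-1}) \in E_2$ by Whitehead, and $\mathrm{diag}(1,\bar u^2)$ fixes the orbit of a unimodular row of length $2$ (since $SL_2 = E_2 \cdot \text{(stuff)}$... ) — more safely, one notes $(\bar a_2, \bar a_3) \sim (\bar a_2, \bar u^2 \bar a_3)$ directly by an elementary operation because some combination $p\bar a_2 + q \bar a_3 = 1$ lets one add $(\bar u^2 - 1)q\bar a_2 \cdot$(appropriate multiple) to adjust $\bar a_3$.

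The main obstacle will be bookkeeping the lifting of elementary operations from $R/Ra_1$ to $E_3(R)$ while keeping the first coordinate equal to $a_1$ throughout, and handling the determinant-$u^2$ discrepancy cleanly without accidentally invoking $SL_3 = E_3$ (which is false in general). I would handle the discrepancy by the Whitehead trick quoted above rather than by any stability hypothesis, so that the lemma holds over an arbitrary commutative ring. An alternative, purely matrix-theoretic proof — exhibiting an explicit $3\times 3$ product of elementary matrices, built from $s, t, a_i$, carrying one row to the other — is also feasible and perhaps what Vaserstein originally did; given the reference to \cite[Corollary 2]{Vstein4}, the expected write-up is short, citing that explicit construction, so in the paper this ``proof'' is really just the statement with the reference, and the work above is only needed if a self-contained argument is wanted.
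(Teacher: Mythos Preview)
You are right that the paper does not prove this lemma: it merely records the statement with a citation to Vaserstein's original paper. So at the level of comparison with the paper, your final paragraph is exactly on target and nothing more is expected.

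Your self-contained sketch, however, has a genuine gap at the point where you try to dispose of the factor $\mathrm{diag}(1,\bar u^{2})$. Neither of the two arguments you offer goes through. The first (``$SL_2 = E_2\cdot(\text{stuff})$'') you already flag as dubious, and indeed $E_2$ is not normal in $SL_2$ over a general commutative ring, so conjugates of $\mathrm{diag}(\bar u,\bar u^{-1})$ by elements of $SL_2(\bar R)$ need not lie in $E_2(\bar R)$. The second (``add an appropriate multiple'') does not work either: the only elementary moves on $(\bar a_2,\bar a_3)$ replace $\bar a_3$ by $\bar a_3+\lambda\bar a_2$ or $\bar a_2$ by $\bar a_2+\mu\bar a_3$, and $(\bar u^{2}-1)\bar a_3$ is not in general a multiple of $\bar a_2$, so one cannot reach $(\bar a_2,\bar u^{2}\bar a_3)$ in one such step, nor does the unimodularity relation $p\bar a_2+q\bar a_3=1$ manufacture the needed multiple. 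The reduction to $Um_2(R/a_1R)/E_2(R/a_1R)$ loses precisely the freedom one needs: Vaserstein's explicit argument is a direct $3\times 3$ elementary construction that uses the first column (carrying $a_1$) in an essential way throughout, rather than freezing it and working modulo $a_1$.
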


Consequently, by Whitehead's lemma the rows 
$(a_1, ua_2, ua_3)$ and $(a_1, a_2, u^2a_3)$ can be seen to be in the 
same elementary orbit. In particular, if $-1$ is a square in $R$, then 
$(-a_1, a_2, a_3)$ and $(a_1, a_2, a_3)$ are in the same elementary orbit. 

Given $v = (v_0, v_1, v_2)$, $w = (w_0, w_1, w_2) \in Um_3(R)$, with 
$v\cdot w^t = 1$, we shall denote by 
$V(v, w)$ the class in $W_E(R)$ of the $4 \times 4$ skew-symmetric matrix  
$$\begin{pmatrix} 0 & v_0 & v_1 & v_2\\
                                 -v_0 &   0 & w_2 & -w_1\\
                                 -v_1 &-w_2 & 0 & w_0 \\
                                 -v_2 &w_1 &-w_0 & 0 \end{pmatrix}
$$
of Pfaffian $1$.
For simplicity, we may just write $V(v)$, as the class of $V(v, w)$ does not 
depend on the choice of $w$ such that $v\cdot w^t = 1$. Associating $V(v)$ to $v$ yields a well defined map $V:Um_3(R)/E_3(R)\to W_E(R)$ called the Vaserstein symbol (\cite[\S 5]{SV}).

\begin{thm}\label{Rao94}
Let $R$ be a smooth affine algebra over a field $k$ of cohomological dimension~$\leq 1$ which is perfect if the characteristic is $2$ or $3$. Then 
the Vaserstein symbol 
$$V: Um_3(R)/E_3(R)\to W_E(R)$$ 
is a bijection. 
\end{thm}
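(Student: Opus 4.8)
The plan is to prove that the Vaserstein symbol $V:Um_3(R)/E_3(R)\to W_E(R)$ is a bijection by separately establishing surjectivity and injectivity, and the natural framework is to exploit the group structure on the target $W_E(R)\simeq W'_E(R)\simeq GW^3_1(R)$ coming from Theorem \ref{main}, together with the exact sequence of Proposition \ref{exactII}. First I would record that under the hypotheses ($R$ smooth affine over $k$ with $\mathrm{c.d.}(k)\leq 1$, perfect in characteristics $2,3$), the ring $R$ has Krull dimension $\leq ?$; but the point is that the relevant stable range and the vanishing of obstruction groups hold. The key input, already available to us, is Rao--van der Kallen's result \cite[Corollary 3.5]{Rao94}, which is precisely the statement that for such $R$ the Vaserstein symbol is bijective; so strictly speaking the theorem as stated is a citation. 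However, to give a self-contained argument I would proceed as follows.

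\textbf{Surjectivity.} Every class in $W_E(R)$ is represented by an invertible skew-symmetric matrix of Pfaffian $1$ of some size $2n$. Using that $R$ is affine of small dimension, one reduces the size: any skew-symmetric unimodular $2n\times 2n$ matrix with $n\geq 3$ is, after an elementary transformation, of the form $M'\perp \psi_2$ with $M'$ of size $2n-2$ (this is the symplectic analogue of the standard ``reduction of the number of generators'' argument, using that an alternating form of rank $2n$ on $R^{2n}$ with $2n-1 > \dim R + 1$ splits off a hyperbolic plane, by Bass-type cancellation for symplectic modules). Iterating, every class is represented in size $4$, and a size-$4$ skew-symmetric matrix of Pfaffian $1$ has exactly the shape displayed before the theorem, hence lies in the image of $V$. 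The main obstacle here is making the rank-reduction step rigorous in the normal (not smooth) setting, but under the smoothness hypothesis of Theorem \ref{Rao94} this is classical.

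\textbf{Injectivity.} Suppose $V(v) = V(v')$ in $W_E(R)$ for $v,v'\in Um_3(R)$. One has to show $v$ and $v'$ lie in the same $E_3(R)$-orbit. The strategy is to use the group structure: by \cite[\S 5]{SV} the Vaserstein symbol is compatible with a ``Mennicke-like'' addition on $Um_3(R)/E_3(R)$ (which exists because $3 \geq 2 = \lfloor (\dim R + 2)/2\rfloor$ is not quite in the stable range, so this is the delicate point — one needs $\dim R \leq 4$ or a relative argument), so it suffices to show that the only row mapping to $0\in W_E(R)$ is the trivial one $e_1$. A row $v$ with $V(v) = 0$ gives, via the identification with $GW^3_1(R)$ and Proposition \ref{exactII}, that the associated rank-$3$ stably free module $P(v)$ becomes trivial in the appropriate Witt-group sense; one then invokes a Suslin-type result (the matrix $V(v,w)\perp \psi_2$ being elementarily equivalent to $\psi_4\perp\psi_2$) to conclude $v$ is completable, hence — again by the dimension bound — elementarily trivial. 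The genuine hard part is this last implication: passing from ``the symplectic class is trivial'' back to ``the unimodular row is elementarily trivial'' requires the full strength of Rao--van der Kallen, specifically the injectivity half of \cite[Corollary 3.5]{Rao94}, which rests on a careful analysis of the action of the elementary symplectic group and a local-global principle. I would therefore present the theorem essentially as an application of that result, spelling out only the translation between the $4\times 4$ matrix normalization used here and the conventions of \cite{SV} and \cite{Rao94}, and leaving the core bijectivity to the cited work.
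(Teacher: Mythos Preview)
Your proposal is correct and matches the paper's approach exactly: the paper's entire proof is the single line ``See \cite[Corollary 3.5]{Rao94}'', and you correctly identify this citation as the core content. Your additional surjectivity/injectivity sketch is unnecessary padding (and, as you yourself note, incomplete and ultimately deferring back to the same citation), so you should simply cite the result and stop.
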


\begin{proof}
See \cite[Corollary 3.5]{Rao94}.
\end{proof}

This bijection gives rise to an abelian group structure on the set $Um_3(R)/E_3(R)$, when $R$ is as above. We recall Vaserstein's rule in $W_E(R)$ (see \cite[Theorem 5.2 (iii)]{SV}). If $(a,b,c)$ and $(a,x,y)$ are in $Um_3(R)/E_3(R)$, then 
$$V(a, b, c) \perp V(a, x, y) = V\left(a, (b, c)\begin{pmatrix} x & y\\ -y' & x\end{pmatrix}\right)$$
for any $x^\prime,y^\prime$ such that $xx' + yy' = 1$ modulo $(a)$. 

\begin{lem}\label{um}
Let $R$ be a ring. The following formulas hold in $W_E(R)$:
\begin{enumerate}[(i)]
\item $V(a, b, c)^{-1} = V(-a', b, c)$, where $a^\prime$ is an inverse of $a$ modulo $(b,c)$.
\item $V(ab^2, c, d)=V(a, c, d) \perp V(b^2, c, d)$.
\end{enumerate}
\end{lem}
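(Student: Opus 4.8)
The plan is to derive both identities from the two structural facts already available: the Vaserstein rule for computing orthogonal sums of Vaserstein symbols sharing a first coordinate, and Vaserstein's computation of inverses in $W_E(R)$ together with the elementary lemma of Vaserstein (Lemma \ref{vstein}) and its square consequences recorded just before the statement. Throughout I would work with the normalized symbols $V(v)=V(v,w)$, using freely that the class is independent of the completion $w$.

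For part (i), the claim is that $V(-a',b,c)$ is the inverse of $V(a,b,c)$, where $aa'\equiv 1 \pmod{(b,c)}$. I would start from the assertion in \cite[\S3]{SV} (used already in the proof of the previous proposition, where $\sigma_{2n}f^{-1}\sigma_{2n}^\vee$ is identified as the inverse in $W_E$) that the inverse of $V(v,w)$ is represented by the ``swapped'' skew-symmetric matrix obtained by interchanging the roles of $v$ and $w$, i.e. essentially $V(w,v)$ up to sign normalization. Then it remains to identify that class with $V(-a',b,c)$: since $(a,b,c)\cdot(a',b',c')^t=1$ for a suitable completion with $ab'+bc'\equiv\ldots$, one can choose the completion of the first row so that the swapped symbol reads off as $V(a',b',c')$-type data, and then use Vaserstein's rule together with Lemma \ref{vstein} (and the sign remark that $(-a_1,a_2,a_3)\sim(a_1,a_2,a_3)$ when $-1$ is a square — although here one should be careful, since $-1$ need not be a square in a general ring $R$, so the minus sign on $a'$ is genuinely needed and should be tracked explicitly through the Pfaffian-$1$ normalization). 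Concretely I expect $V(a,b,c)\perp V(-a',b,c)$ to be computed by Vaserstein's rule to a symbol $V(a,*,*)$ whose second and third coordinates can be cleared to $e_1$ by an elementary transformation, giving the neutral element.

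For part (ii), I would apply Vaserstein's rule directly. Both $V(ab^2,c,d)$ and the proposed summands have the form $V(\star,c,d)$ — but the rule as stated combines symbols with a common \emph{first} coordinate, whereas here $c,d$ are the common entries. So first I would use part (i), or rather the symmetry of the $4\times 4$ matrix, to rewrite things: note that by permuting rows/columns of the displayed skew-symmetric matrix (a congruence by a permutation-type matrix, which lies in $E_4$ up to sign and hence preserves the class in $W_E$), one has a relation expressing $V(a,b,c)$ in terms of a symbol with first coordinate $b$ or $c$; alternatively one works directly with the interpretation of $V(a,b,c)$ as the class of the exhibited matrix and checks that $V(ab^2,c,d)$, $V(a,c,d)$, $V(b^2,c,d)$ fit into an orthogonal-sum relation by an explicit elementary congruence. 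The cleanest route is probably: use Lemma \ref{vstein} in the form $(a_1,ua_2,ua_3)\sim(a_1,a_2,u^2a_3)$ to massage first coordinates into squares, then apply the Vaserstein rule with common first coordinate after a suitable permutation congruence, and finally recognize the resulting symbol as $V(a,c,d)\perp V(b^2,c,d)$.

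The main obstacle I anticipate is bookkeeping of the Pfaffian-$1$ normalization and the sign conventions: the Vaserstein symbol as defined lands in $W_E(R)$ (Pfaffian $1$), not merely in $W'_E(R)$, so every elementary congruence used must be checked to preserve the Pfaffian, and the permutation matrices that reorder the coordinates of the $4\times4$ form have determinant $\pm1$ and contribute signs that must cancel against the $-a'$ in (i). Getting the rule's ``common first coordinate'' hypothesis to apply — i.e. finding the right congruence turning $(ab^2,c,d)$-type data into data amenable to Vaserstein's rule — is where the actual computation lives; the rest is formal manipulation using results already established (Theorem \ref{Rao94}, the group structure, and the rule from \cite{SV}).
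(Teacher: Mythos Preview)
Your approach is in the same spirit as the paper's: the paper simply observes that Vaserstein's rule holds in $W_E(R)$ and then cites van der Kallen's proofs of Lemma~3.5(iii) and (v), which derive exactly these identities from that rule together with elementary row operations. So the intended ingredients are precisely Vaserstein's rule and the elementary congruences you list; the detour through the explicit matrix inverse $\sigma_{2n}f^{-1}\sigma_{2n}^\vee$ from \cite[\S3]{SV} is not needed and is not how the cited argument proceeds.

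There is, however, one concrete gap in your sketch. You write that you ``expect $V(a,b,c)\perp V(-a',b,c)$ to be computed by Vaserstein's rule to a symbol $V(a,\ast,\ast)$'', and for (ii) you note that the rule needs a common \emph{first} coordinate while the rows at hand share their last two entries. The resolution you are groping toward is simply the cyclic invariance $V(v_0,v_1,v_2)=V(v_1,v_2,v_0)$ in $W_E(R)$: conjugating the displayed $4\times 4$ skew-symmetric matrix by the permutation matrix of the $3$-cycle $(234)$ turns $V((v_0,v_1,v_2),(w_0,w_1,w_2))$ into $V((v_1,v_2,v_0),(w_1,w_2,w_0))$, and this permutation matrix lies in $E_4(R)$ with determinant $1$, hence preserves both the class in $W_E(R)$ and the Pfaffian. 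With this in hand, any two symbols sharing two of their three entries can be cycled to share the first entry, and then Vaserstein's rule applies directly; this is exactly the manoeuvre underlying van der Kallen's short computations. Your worries about Pfaffian normalization and signs are legitimate bookkeeping, but they are absorbed by this single observation rather than requiring a case-by-case analysis.
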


\begin{proof}
Since Vaserstein's rule holds in $W_E(R)$, it suffices to follow the proof of \cite[Lemma 3.5(iii), Lemma 3.5(v)]{vdk}.
\end{proof}

We now recall the \emph{Antipodal Lemma} of Rao in \cite[Lemma 1.3.1]{Rao87}.

\begin{lem}\label{antipodal}
Let $R$ be a ring and $(a,b,c)\in Um_3(R)$. Suppose that $(a,b,c)$ and $(-a,b,c)$ are in the same elementary orbit. Then for any $n\in\N$, we have 
$$nV(a,b,c)=V(a^n,b,c)$$
in $W_E(R)$.
\end{lem}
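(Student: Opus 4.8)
The plan is to prove the identity $nV(a,b,c)=V(a^n,b,c)$ in $W_E(R)$ by induction on $n$, with the whole weight of the argument concentrated in the single case $n=2$.

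First I would carry out the reduction. The cases $n=0$ and $n=1$ are immediate: the row $(a^0,b,c)=(1,b,c)$ lies in the base orbit of $Um_3(R)/E_3(R)$, so $V(1,b,c)$ is the class of $\psi_4$, i.e.\ $0$ in $W_E(R)$, which matches $0\cdot V(a,b,c)$; and $n=1$ is a tautology. For $n\geq 3$ I would write $a^n=a^{n-2}\cdot(a)^2$ and apply Lemma \ref{um}(ii), which holds over an arbitrary ring, to obtain
$$V(a^n,b,c)=V(a^{n-2},b,c)\perp V(a^2,b,c)$$
in $W_E(R)$. Granting the case $n=2$, an induction on $n$ then gives $V(a^n,b,c)=(n-2)V(a,b,c)\perp 2V(a,b,c)=nV(a,b,c)$, the recursion bottoming out at $n=1$ (for $n$ odd) or $n=2$ (for $n$ even). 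Hence the lemma reduces entirely to the identity $V(a^2,b,c)=2V(a,b,c)$.

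For this identity I would use Vaserstein's rule. Fix $b_0,b_1,c_1\in R$ with $ab_0+bb_1+cc_1=1$, so that $bb_1+cc_1\equiv 1\pmod{(a)}$; Vaserstein's rule applied to $V(a,b,c)\perp V(a,b,c)$ then yields
$$2V(a,b,c)=V\!\left(a,\ (b,c)\begin{pmatrix} b & c\\ -c_1 & b_1\end{pmatrix}\right)=V\bigl(a,\ b^2-cc_1,\ bc+cb_1\bigr).$$
It remains to identify the class of the triple $(a,\,b^2-cc_1,\,bc+cb_1)$ with that of $(a^2,b,c)$. This is the crux, and the step I expect to be the main obstacle: it is precisely here that one feeds in the antipodal hypothesis $(a,b,c)\sim(-a,b,c)$. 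Using it together with Vaserstein's rescaling Lemma \ref{vstein} and the remarks following it (for instance that $(a_1,ua_2,ua_3)$ and $(a_1,a_2,u^2a_3)$ lie in the same elementary orbit when $Ra_1+Ru=R$), one transforms $(a,\,b^2-cc_1,\,bc+cb_1)$ step by step into $(a^2,b,c)$; the manipulation parallels those of van der Kallen on elementary orbit sets in \cite{vdk}, transported along the Vaserstein symbol $V$. Once the two triples are shown to represent the same class in $W_E(R)$, the equality $V(a^2,b,c)=2V(a,b,c)$ follows, and the lemma is proved. I note that the antipodal hypothesis enters only in this last step; all the bookkeeping for general $n$ is unconditional, resting solely on Lemma \ref{um}(ii) and the vanishing $V(1,b,c)=0$.
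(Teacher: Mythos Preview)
Your reduction to the case $n=2$ via Lemma~\ref{um}(ii) is correct and is exactly the scaffolding the paper intends. The gap is in the case $n=2$ itself: you correctly compute $2V(a,b,c)=V\bigl(a,\,b^2-cc_1,\,c(b+b_1)\bigr)$ via Vaserstein's rule, but then the identification of this class with $V(a^2,b,c)$ is left as ``the step I expect to be the main obstacle'', with only a gesture toward Lemma~\ref{vstein} and van der Kallen's manipulations. That is the entire content of the lemma, and it is not clear how one would transform the triple $\bigl(a,\,b^2-cc_1,\,c(b+b_1)\bigr)$ into $(a^2,b,c)$ by such moves directly; in particular Lemma~\ref{vstein} only lets you shuffle units coprime to the \emph{first} entry among the last two, which does not obviously produce the passage from first entry $a$ to first entry $a^2$.

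The intended route, signalled by the paper's reference to ``the formulas of Lemma~\ref{um}'' (plural), bypasses this explicit computation by using part~(i), which you never invoke. Choose $a'$ with $aa'\equiv 1\pmod{(b,c)}$. By~(i), $V(a,b,c)^{-1}=V(-a',b,c)$. The antipodal hypothesis gives $V(a,b,c)=V(-a,b,c)$; applying~(i) to $(-a,b,c)$, with $-a'$ an inverse of $-a$ modulo $(b,c)$, yields $V(-a,b,c)^{-1}=V(a',b,c)$. Hence $V(a',b,c)=V(-a',b,c)=-V(a,b,c)$. Now $a'a^2\equiv a\pmod{(b,c)}$, so $(a'a^2,b,c)$ and $(a,b,c)$ lie in the same elementary orbit, and by~(ii),
\[
V(a,b,c)=V(a'a^2,b,c)=V(a',b,c)+V(a^2,b,c)=-V(a,b,c)+V(a^2,b,c).
\]
This gives $V(a^2,b,c)=2V(a,b,c)$, and your induction then finishes the proof.
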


\begin{proof}
The proof of \cite[Lemma 1.3.1]{Rao87} applies mutatis mutandis using the formulas of Lemma \ref{um}.
\end{proof}

We now state and prove our main theorem:

\begin{thm}\label{faselswr}
Let $R$ be a $d$-dimensional normal affine algebra over an algebraically closed field $k$ such that $\gcd((d-1)!, \cha k)=1$. If $d=3$, suppose moreover that $R$ is smooth. Then every stably free $R$-module $P$ of rank $d-1$ is free.
\end{thm}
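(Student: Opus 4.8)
The plan is to follow the roadmap sketched in the introduction, reducing the general case to a statement about unimodular rows of length $d$ over a smooth threefold. First I would recall Suslin's cancellation theorem: a stably free module $P$ of rank $d-1$ over a $d$-dimensional affine algebra over an algebraically closed field satisfies $P\oplus R\simeq R^d$, hence corresponds to a class in $Um_d(R)/E_d(R)$ represented by a unimodular row $a=(a_1,\ldots,a_d)$. The goal becomes to show this class is trivial. Using Swan's Bertini theorem (and, for the normal non-smooth case with $d\geq 4$, a suitable version of it together with the bound $d\geq 4$ to guarantee the generic complete intersection one cuts out lies in the smooth locus), I would arrange, after elementary transformations, that $B:=R/(a_4,\ldots,a_d)$ is a \emph{smooth} affine threefold over $k$, and that the row descends to a unimodular row $(\overline a_1,\overline a_2,\overline a_3)\in Um_3(B)$. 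There is a natural map $Um_3(B)/E_3(B)\to Um_d(R)/E_d(R)$ sending $(\overline a_1,\overline a_2,\overline a_3)$ to the class of $(a_1,\ldots,a_d)$, so it suffices to kill the class of $(\overline a_1,\overline a_2,\overline a_3)$ in $Um_3(B)/E_3(B)$.

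Next I would transport the problem into the elementary symplectic Witt group. Since $B$ is a smooth affine threefold over the algebraically closed (hence cohomological dimension $\leq 1$, in fact $0$) field $k$, Theorem \ref{Rao94} gives that the Vaserstein symbol $V:Um_3(B)/E_3(B)\to W_E(B)$ is a bijection, so $Um_3(B)/E_3(B)$ inherits an abelian group structure and it is enough to show $V(\overline a_1,\overline a_2,\overline a_3)=0$ in $W_E(B)$. By Theorem \ref{main}, $W_E(B)\simeq GW^3_1(B)$ (more precisely $W_E(B)$ is the reduced part, $GW^3_{1,\mathrm{red}}(B)$, after splitting off the Pfaffian factor $\O_B(B)^\times$), and by Proposition \ref{presentation} this reduced group is isomorphic to $H^2(B,K_3)$. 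Finally, Proposition \ref{h2k3} tells us $H^2(B,K_3)$ is divisible prime to $\cha k$. In particular it is $(d-1)!$-divisible since $\gcd((d-1)!,\cha k)=1$, so there exists a class $\beta\in Um_3(B)/E_3(B)$ with $(d-1)!\cdot\beta = V(\overline a_1,\overline a_2,\overline a_3)$; writing $\beta=V(b_1,b_2,b_3)$ and using that $k$ contains $\sqrt{-1}$ (so that $(-b_1,b_2,b_3)$ and $(b_1,b_2,b_3)$ lie in the same elementary orbit by Lemma \ref{vstein} and the remark following it), Rao's Antipodal Lemma \ref{antipodal} gives $(d-1)!\cdot V(b_1,b_2,b_3)=V(b_1^{(d-1)!},b_2,b_3)$. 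Hence, up to elementary transformations, we may assume $(\overline a_1,\overline a_2,\overline a_3)=(b_1^{(d-1)!},b_2,b_3)$, and therefore $(a_1,\ldots,a_d)$ is elementarily equivalent to $(b_1^{(d-1)!},b_2,a_3,\ldots,a_d)$ over $R$.

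To conclude, I would invoke Suslin's theorem that a row of the shape $(c_1^{r!},c_2,\ldots,c_{r+1})$ of length $r+1$ (here $r=d-1$, length $d$) is completable to an invertible matrix (\cite[Theorem 2]{Suslin1}); this forces $P(a)$ to be free, which is exactly the assertion. The case $d\leq 2$ is classical (for $d=1$ it is elementary, for $d=2$ it follows from Bass stable range / the fact that unimodular rows of length $2$ over a $2$-dimensional ring are completable once $SK_1$ vanishes appropriately, and $(d-1)!=1$ anyway), and $d=3$ is covered directly by the smooth hypothesis.

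I expect the main obstacle to be the Bertini-type step: ensuring that after elementary modifications the partial quotient $B=R/(a_4,\ldots,a_d)$ is genuinely a \emph{smooth} threefold even when $R$ is only normal (not smooth), which is why the statement requires $d\geq 4$ in the non-smooth case — one needs the generic linear combination of the $a_i$ to cut out a subvariety avoiding the singular locus, which has codimension $\geq 2$ in a normal variety, so this is possible precisely when there is enough room, i.e. when $d\geq 4$ (equivalently $\dim B=3$ and the singular locus has dimension $\leq d-2\leq $ something that a general threefold section misses). Handling the finite-field subtleties of Swan's Bertini theorem does not arise here since $k$ is algebraically closed, hence infinite, so that simplification is available throughout.
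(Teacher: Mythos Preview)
Your outline is correct and matches the paper's proof essentially step for step: Suslin cancellation, reduction to a smooth threefold $B$ via Bertini, the lifting map $Um_3(B)/E_3(B)\to Um_d(R)/E_d(R)$, the Vaserstein symbol bijection, divisibility of $W_E(B)\simeq H^2(B,K_3)$, the Antipodal Lemma, and Suslin's factorial completion.

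The one place where your sketch diverges from the actual mechanism is the Bertini step in the normal, non-smooth case. You phrase it as ``the generic linear combination of the $a_i$ cuts out a subvariety avoiding the singular locus, which has codimension $\geq 2$,'' but genericity alone does not force $B=R/(a_4,\ldots,a_d)$ to miss the singular locus---a general complete intersection of codimension $d-3$ will typically meet a codimension-$2$ subset. The paper's trick is different: let $J$ be the ideal of the singular locus; since $R$ is normal, $\dim(R/J)\leq d-2$, so Bass's stable range theorem gives $Um_d(R/J)=e_1E_d(R/J)$, and after elementary operations one may assume $a_d\equiv 1\pmod J$ and $a_1,\ldots,a_{d-1}\in J$. \emph{Then} Swan's Bertini yields $B$ smooth outside $\mathrm{Sing}(R)$, and since $a_d$ is a unit on $\mathrm{Sing}(R)$, the scheme $B$ does not meet it at all. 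This is where the hypothesis $d\geq 4$ is genuinely used (to get $\dim(R/J)\leq d-2<d-1$ so the stable range argument applies), and it is the missing ingredient in your account of the obstacle.
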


\begin{proof}
Let $P$ be a stably free module of rank $d-1$. Since the result is clear when $d\leq 2$, we assume that $d\geq 3$. Using Suslin's cancellation theorem \cite[Theorem 1]{Suslin1}, we can suppose that there is an isomorphism $P\oplus R\simeq R^d$, and therefore that $P$ is given by a unimodular row $(a_1,\ldots,a_d)$. In view of \cite[Theorem 2]{Suslin1}, to prove that $P$ is free it suffices to show that there exists a unimodular row $(b_1,\ldots,b_d)$ such that $(a_1,\ldots,a_d)=(b_1^{(d-1)!},\ldots,b_d)$ in $Um_d(R)/E_d(R)$. 

Suppose that $d\geq 4$. Let $J$ be the ideal of the singular locus of $R$. Since $R$ is normal, $J$ has height at least $2$ and $\dim (R/J)\leq d-2$. It follows from \cite[Theorem 3.5]{Bass68} that $Um_d(R/J)=e_1E_d(R/J)$ and we can therefore assume, performing elementary operations if necessary, that $a_d\equiv 1\pmod J$ and $a_1,\ldots,a_{d-1}\in J$. Using now Swan's Bertini theorem \cite[Theorem 1.5]{Swan}, we can perform elementary operations on $(a_1,\ldots,a_d)$ such that $B:=R/(a_4, \ldots , a_d)$ is either empty, either a non-singular threefold outside the singular locus of $R$. In the first case, the row $(a_4, \ldots , a_d)$ is unimodular, and therefore the row $(a_1,\ldots,a_d)$ is completable in an elementary matrix. Thus we can restrict to the second case. In this situation, we see that $B$ is actually smooth since $a_d\equiv 1 \pmod J$.

Given a unimodular row $(\overline a,\overline b,\overline c)$ on $B$, we can choose lifts $a,b,c\in R$ and consider the unimodular row $(a,b,c,a_4,\ldots,a_d)$ on $R$. It is straightforward to check that this gives a well-defined map
$$Um_3(B)/E_3(B)\to Um_d(R)/E_d(R),$$
showing that $(a_1,\ldots,a_d)$ comes from the unimodular row $(\overline a_1,\overline a_2,\overline a_3)$ on $B$.
We are thus reduced to the case where $R$ is the affine algebra of a smooth threefold. By Theorem \ref{Rao94}, the set $Um_3(R)/E_3(R)$ is in bijection with $W_E(R)$ and is thus endowed with the structure of an abelian group. Since $-1$ is a square in $k$, Lemma \ref{antipodal} shows that $n\cdot (a_1,a_2,a_3)=(a_1^n,a_2,a_3)$ in $Um_3(R)/E_3(R)$ for any $n\in\N$. Now Propositions \ref{presentation} and \ref{h2k3} show that $Um_3(R)/E_3(R)$ is a divisible group prime to the characteristic of $k$. Since $\gcd((d-1)!, \cha k)=1$, there exists a unimodular row $(b_1,b_2,b_3)\in Um_3(R)$ such that 
$$(a_1,a_2,a_3)=(d-1)!\cdot (b_1,b_2,b_3)=(b_1^{(d-1)!},b_2,b_3)$$
in $Um_3(R)/E_3(R)$. The result follows.
\end{proof}

\begin{rem}\label{faseldd}
The proof of the theorem actually gives a bit more. Let $R$ be an affine algebra of dimension $d\geq 4$ over an algebraically closed field $k$ such that $\gcd((d-1)!, \cha k)=1$. Let $J$ be the ideal of the singular locus of $R$, and let $(a_1,\ldots,a_d)$ be a unimodular row of rank $d$ which is in the elementary orbit of $e_1$ in $Um_d(R/J)/E_d(R/J)$. Then the projective module associated to $(a_1,\ldots,a_d)$ is free. To see this, observe that we can use \cite[Theorem 2]{Roit1} to reduce to the case of a reduced ring and then apply the proof of the theorem.
\end{rem}

To conclude, we derive from the above theorem some stability results for the group $K_1(R)$ where $R$ is a smooth affine algebra over an algebraically closed field. 

\begin{cor} \label{injstab}
Let $R$ be a smooth algebra of dimension $d\geq 3$ over an algebraically 
closed field $k$. Assume that $d!k = k$. Then $SL_d(R)\cap E_{d+1}(R)=E_d(R)$.
\end{cor}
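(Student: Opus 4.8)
The goal is to show $SL_d(R) \cap E_{d+1}(R) = E_d(R)$ for a smooth $d$-dimensional algebra $R$ over an algebraically closed field with $d! \in R^\times$, where $d \geq 3$. Since $E_d(R) \subseteq SL_d(R)$ always and $E_d(R) \subseteq E_{d+1}(R)$ via the standard inclusion $GL_d \hookrightarrow GL_{d+1}$, only the inclusion $SL_d(R) \cap E_{d+1}(R) \subseteq E_d(R)$ requires proof. First I would take $\alpha \in SL_d(R) \cap E_{d+1}(R)$ and consider its first row $v := e_1 \alpha \in Um_d(R)$. The containment $\alpha \in E_{d+1}(R)$ forces $v$, viewed inside $Um_{d+1}(R)$ by appending a zero, to be in the elementary orbit of $e_1$; the content to extract is that $v$ itself represents a class in $Um_d(R)/E_d(R)$ that dies after one stabilization, i.e. $v \cdot (\alpha')^{-1}$-type manipulations show the associated stably free module $P(v)$ is stably free of rank $d-1$ with $P(v) \oplus R \simeq R^d$.

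**Key steps.** The heart of the argument is Theorem \ref{faselswr}: under the stated hypotheses (here $R$ is smooth and $d \geq 3$, and $\gcd((d-1)!, \cha k) = 1$ follows from $d! \in k^\times$), every stably free module of rank $d-1$ is free, so $P(v)$ is free, which means $v \in e_1 GL_d(R)$. Then I would need the standard refinement that, for $v$ in the elementary orbit of $e_1$ after one stabilization, being in $e_1 GL_d(R)$ actually forces $v \in e_1 E_d(R)$ — this is where one uses that $v \sim e_1$ in $Um_{d+1}(R)/E_{d+1}(R)$ together with a Mennicke-symbol / Suslin-type argument (cf. the reasoning behind \cite[Theorem 2]{Suslin1}) to get $v \in e_1 E_d(R)$. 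Once $e_1 \alpha \in e_1 E_d(R)$, after multiplying $\alpha$ by an element of $E_d(R)$ we may assume $e_1 \alpha = e_1$, so $\alpha$ has the block form $\begin{pmatrix} 1 & 0 \\ * & \beta \end{pmatrix}$ with $\beta \in SL_{d-1}(R)$; clearing the first column by elementary operations and inducting (the base case $SL_2 \cap E_3$ being handled by the same orbit analysis, or noting $d \geq 3$ makes $\beta \in SL_{d-1}(R) \subseteq GL_{d-1}(R)$ injectable) shows $\alpha \in E_d(R)$.

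**Main obstacle.** The delicate point is not Theorem \ref{faselswr} itself — that is the deep input and we may invoke it — but rather the passage from "$P(v)$ is free" (equivalently $v \in e_1 GL_d(R)$) to "$v \in e_1 E_d(R)$." The freeness of $P(v)$ only gives completability of $v$ to an invertible matrix, and in general $GL_d / E_d$ can be nontrivial; one must use that $v$ became elementarily trivial after \emph{one} stabilization, which pins down the relevant Mennicke-type obstruction and lets one descend from $GL_d$ to $E_d$. I expect this step to require a careful invocation of the group structure on $Um_d(R)/E_d(R)$ (or of Vaserstein/Suslin's results on unimodular rows and elementary completability) rather than anything genuinely new, but it is the step where the argument is least "formal." The inductive clearing of columns and the reduction of $SL_d \cap E_{d+1}$ to statements about the first row are routine and I would not belabor them.
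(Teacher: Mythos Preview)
Your proposal has a genuine gap that cannot be patched by ``Mennicke-symbol / Suslin-type'' hand-waving. The most basic symptom is that your invocation of Theorem~\ref{faselswr} is vacuous: you apply it to $v = e_1\alpha$, but $v$ is \emph{already} the first row of $\alpha \in SL_d(R)$, so $P(v)$ is free trivially and $v \in e_1 GL_d(R)$ holds without any input from the main theorem. All the content of the corollary is therefore packed into your ``standard refinement'' and your induction, and both fail.

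For the refinement: knowing $v \in e_1 GL_d(R)$ and $(v,0)\in e_1 E_{d+1}(R)$ does \emph{not}, in general, force $v\in e_1 E_d(R)$; the obstruction to this is exactly the failure of injective $K_1$-stability at level $d$, which is the statement you are trying to prove. For the induction: even granting $e_1\alpha\in e_1 E_d(R)$ and reducing to $\alpha \sim_{E_d} 1\perp\beta$ with $\beta\in SL_{d-1}(R)$, the hypothesis $1\perp\alpha\in E_{d+1}(R)$ only yields $I_2\perp\beta\in E_{d+1}(R)$, not $1\perp\beta\in E_d(R)$. So you cannot feed $\beta$ back into the same statement one dimension down; the inductive loop never closes.

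The paper's argument is of a completely different nature. It does not work with $Um_d(R)$ at all. Instead it uses Vorst's theorem to reduce to constructing a homotopy $N(X)\in SL_d(R[X])$ with $N(0)=Id$, $N(1)=\sigma$. The elementary path $\alpha(X)$ from $Id$ to $1\perp\sigma$ in $GL_{d+1}(R[X])$ gives a unimodular row $e_1\alpha(X)$ of length $d{+}1$ over $R[X]$ that is congruent to $e_1$ modulo $f=X^2-X$. The main theorem (in the singular form, Remark~\ref{faseldd}) is then applied not to $R$ but to the auxiliary ring $B=R[X,T]/(T^2-Tf)$ of dimension $d{+}2$, where this congruence condition places the row inside the smooth locus; this, combined with \cite[Proposition~3.3]{Rao94}, produces the matrix $M(X)\equiv Id\pmod f$ with $e_1 M(X)=e_1\alpha(X)$, from which the desired $N(X)$ is read off. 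The key point you are missing is that the main theorem must be leveraged over a carefully chosen larger ring to manufacture a homotopy; applying it directly to $R$ gives nothing new.
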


\begin{proof}
Let $\sigma \in SL_d(R)\cap E_{d+1}(R)$. In view of T. Vorst's theorem in \cite{Vorst}, it suffices to show that there exists a matrix $N(X)\in SL_d(R[X])$ with the property that $N(0)=Id$ and $N(1)=\sigma$. 

Since $1\perp \sigma\in E_{d+1}(R)$, there exists $\alpha(X)\in GL_{d+1}(R[X])$ with $\alpha(0)=Id$ and $\alpha(1)=1\perp \sigma$. It follows that the unimodular row $v(X):=e_1\alpha(X)\in Um_{d+1}(R[X])$ is congruent to $e_1$ modulo $f$, where $f:=X^2-X$. We now prove that there exists a matrix $M(X)\in GL_{d+1}(R[X])$ congruent to $Id$ modulo $f$ satisfying the equality $e_1M(X)=e_1\alpha(X)$. This will prove the corollary since these properties imply that
$$\alpha(X)M(X)^{-1}=\begin{pmatrix} 1 & 0\\ \star & N(X)\end{pmatrix}$$
with $N(X)\in GL_d(R[X])$ such that $N(0)=Id$ and $N(1)=\sigma$.

To find $M(X)$, we argue as in \cite[Proposition 3.3]{Rao94}. Set $B = R[X, T]/(T^2 - Tf)$
and write $e_1 \alpha(X) = e_1 +(X^2-X)w(X)$, with $w(X) \in R[X]^{d+1}$. Observe that $u(T):= e_1 + Tw(X)$ is a unimodular row of length $d+1$ over $B$ with $u(f)=v(X)$ and $u(0)=e_1$. Since $R$ is smooth, it follows that the ideal of the singular locus of $B$ is precisely $(T,f)$. Now $u(T)$ is congruent to $e_1$ modulo $T$ and we can use Remark \ref{faseldd} to find $\varepsilon(T) \in E_{d+1}(B)$ such that $u(T)\varepsilon(T)$ is of the form $(b_1^{d!},\ldots,b_{d+1})$. By \cite[Proposition 3.3]{Rao94} or \cite[Proposition 5.4]{Rao09}, there is a $\delta(T) \in SL_{d+1}(B)$ congruent to $Id$ modulo $f$ such that $u(T)\varepsilon(T) = e_1\delta(T)$. Finally, we set $M(X):=\beta(0)\beta(f)^{-1}$ where $\beta(T) = \varepsilon(T)\delta(T)^{-1}$. It is straightforward to check that $M(X)$ satisfies the conditions stated above, and therefore the result is proved.
\end{proof}

\begin{cor} \label{finite}
Let $R$ be a smooth algebra over the algebraic closure $k$ of 
a finite  field of dimension $d\geq 3$. Assume that $d!k = k$. Then the 
natural map $SL_d(R)/E_d(R) \to SK_1(R)$ is an isomorphism. 
\end{cor}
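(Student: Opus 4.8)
The plan is to deduce this from Corollary \ref{injstab} together with surjective stability for $K_1$. Recall that we want the natural map $SL_d(R)/E_d(R)\to SK_1(R)$ to be an isomorphism, where $SK_1(R)=SL(R)/E(R)$. Corollary \ref{injstab} already gives $SL_d(R)\cap E_{d+1}(R)=E_d(R)$, which is exactly the statement that $SL_d(R)/E_d(R)\to SL_{d+1}(R)/E_{d+1}(R)$ is injective. Iterating, or invoking the Bass stability results, this map is injective all the way up, so $SL_d(R)/E_d(R)\to SK_1(R)$ is injective. It therefore remains to prove surjectivity, i.e. that $SL_d(R)\cdot E(R)=SL(R)$, equivalently that every class in $SK_1(R)$ is represented by a matrix in $SL_d(R)$.

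For surjectivity I would appeal to surjective $K_1$-stability. For an affine algebra of dimension $d$ over a field, Bass's stable range theorem gives $SL_{d+1}(R)/E_{d+1}(R)\to SK_1(R)$ surjective already in the range $n\geq d+1$; the sharper statement needed here, namely that $SL_d(R)$ already surjects onto $SK_1(R)$ when $k=\overline{\mathbb{F}_p}$, follows from Suslin's/Vaserstein's results on $K_1$ of affine algebras over such fields (the point being that over the algebraic closure of a finite field one gains one in the stable range, just as one does for unimodular rows). Concretely: given $\sigma\in SL_{d+1}(R)$, we must produce $\tau\in SL_d(R)$ with $\sigma\tau^{-1}\in E_{d+1}(R)$. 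The first row $e_1\sigma\in Um_{d+1}(R)$; by Swan-Bertini plus the results quoted in the proof of Theorem \ref{faselswr} (applied with the zero ideal as singular locus, using $d!k=k$ so that $\gcd(d!,\operatorname{char}k)=1$ and $d\geq 3$) one can perform elementary operations bringing $e_1\sigma$ to $e_1$, i.e. after multiplying $\sigma$ on the right by an element of $E_{d+1}(R)$ we may assume $e_1\sigma=e_1$, whence $\sigma=\begin{pmatrix}1 & 0\\ \star & \tau\end{pmatrix}$ with $\tau\in SL_d(R)$ and $\sigma$ equivalent to $\tau$ modulo $E_{d+1}(R)$. This gives surjectivity of $SL_d(R)/E_d(R)\to SL_{d+1}(R)/E_{d+1}(R)$, and combined with Bass's surjective stability $SL_{d+1}(R)/E_{d+1}(R)\twoheadrightarrow SK_1(R)$ we get surjectivity onto $SK_1(R)$.

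Putting the two halves together, the composite $SL_d(R)/E_d(R)\to SK_1(R)$ is a bijection of pointed sets, and since both sides are groups (the source being a group because $SL_d(R)\cap E_{d+1}(R)=E_d(R)$ makes $SL_d(R)/E_d(R)$ a quotient group, or alternatively via the commutator calculus of Bass), it is a group isomorphism.

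The main obstacle is the surjectivity step: one needs that $SL_d(R)$ already exhausts $SK_1(R)$ modulo $E(R)$, which is the "extra one" in the stable range available over $\overline{\mathbb{F}_p}$. This requires the Swan-Bertini reduction to a smooth threefold section and the divisibility of $W_E$ of that threefold (Propositions \ref{presentation} and \ref{h2k3}) exactly as in Theorem \ref{faselswr}, applied now over the polynomial-free ring $R$ itself rather than over $R[X]$ or $B$; the verification that the relevant hypotheses ($d\geq 3$, $\gcd(d!,\operatorname{char}k)=1$, smoothness) are met, and that the map on first rows is compatible with the elementary group actions, is where the real content lies, but it is entirely parallel to the argument already given for Corollary \ref{injstab} and Theorem \ref{faselswr}.
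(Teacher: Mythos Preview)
Your two-step plan—injectivity from Corollary \ref{injstab}, surjectivity from a Suslin--Vaserstein transitivity result—is exactly the paper's argument, and your initial appeal to ``Suslin's/Vaserstein's results'' is on target: the paper simply cites \cite[Corollary 17.3]{SV}, which gives $Um_n(R)=e_1E_n(R)$ for the relevant $n$ when $R$ is an affine algebra over the algebraic closure of a finite field, and deduces surjectivity in one line.

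The problem is your ``Concretely'' paragraph, where you try to extract this transitivity from Theorem \ref{faselswr}. That theorem (and its proof) concludes only that the associated stably free module is \emph{free}, i.e.\ that the row lies in $e_1GL(R)$; it does \emph{not} show the row lies in the orbit $e_1E(R)$. The proof brings the row, via elementary moves, to the shape $(b_1^{(d-1)!},b_2,\ldots,b_d)$ and then quotes \cite[Theorem 2]{Suslin1} to say this is the first row of an invertible matrix—there is no claim that matrix is elementary. The same gap is present in Suslin's original argument for rows of length $d+1$: divisibility of the relevant group buys completability, not elementary completability. That gap is precisely the surjectivity of $SL_d(R)/E_d(R)\to SK_1(R)$ you are after, so invoking Theorem \ref{faselswr} here is circular. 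A tell-tale sign: if your concrete argument were valid it would apply over any algebraically closed field, yet the corollary is stated only for $k=\overline{\mathbb{F}_p}$. The restriction is genuine and enters through \cite[Corollary 17.3]{SV}, whose proof uses torsion properties of $K$-theory specific to (closures of) finite fields. Also, your final paragraph speaks of a ``Swan--Bertini reduction to a smooth threefold section'' for a length-$(d+1)$ row over a $d$-fold; the natural Bertini cut in that situation lands on a surface or a curve, not a threefold, so Propositions \ref{presentation} and \ref{h2k3} are not the relevant inputs. Drop the concrete derivation and cite \cite[Corollary 17.3]{SV} directly.
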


\begin{proof}
Using \cite[Corollary 17.3]{SV}, we see that $Um_{n}(R) = e_1E_n(R)$ for $n\geq d$. It follows that $SL_d(R)/E_d(R) \to SK_1(R)$ is onto. We conclude using Corollary \ref{injstab}.
\end{proof}

\begin{cor}\label{vsym4}
Let $R$ be a smooth algebra of dimension $d \leq 4$ over an algebraically closed field $k$. Assume that $d!k = k$. Then 
the Vaserstein symbol 
$$V: Um_3(R)/E_3(R) \to W_E(R)$$
is an isomorphism. 
\end{cor}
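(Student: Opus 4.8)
The plan is to deduce Corollary \ref{vsym4} from Theorem \ref{Rao94} together with Corollary \ref{injstab}, by reducing the injectivity and surjectivity of the Vaserstein symbol over a $d$-dimensional smooth algebra ($d\le 4$) to a statement about a smooth threefold, where Theorem \ref{Rao94} applies directly. The cases $d\le 3$ are already covered (for $d=3$ it is literally Theorem \ref{Rao94}, since an algebraically closed field has cohomological dimension $0\le 1$; for $d\le 2$ everything in sight is trivial), so the real content is the case $d=4$.

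For $d=4$, first I would recall that the Vaserstein symbol $V$ is always a group homomorphism once $Um_3(R)/E_3(R)$ is given a group structure, and that by \cite[Theorem 5.2]{SV} it is surjective for any ring of dimension $\le 4$; thus the only issue is injectivity. So suppose $(a_1,a_2,a_3)\in Um_3(R)$ has $V(a_1,a_2,a_3)=0$ in $W_E(R)$. I want to show $(a_1,a_2,a_3)\sim e_1$ in $Um_3(R)/E_3(R)$. The idea is to lift the unimodular row $(a_1,a_2,a_3)$ to a unimodular row of length $4$, say $(a_1,a_2,a_3,0)$ or rather to use the standard comparison: the class of $(a_1,a_2,a_3)$ in $Um_3(R)/E_3(R)$ maps to the class of $(a_1,a_2,a_3,1)\in Um_4(R)/E_4(R)$, and one knows $(a_1,a_2,a_3,1)\sim e_1$ automatically. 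That is too crude; instead the right move is to relate $V$ over $R$ to $V$ over a generic hyperplane section.

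More precisely, I would apply Swan's Bertini theorem \cite[Theorem 1.5]{Swan} to replace, after an elementary transformation, the relevant data by one living on a smooth threefold $B = R/(a_4)$ for a suitable $a_4$, exactly as in the proof of Theorem \ref{faselswr}: the map $Um_3(B)/E_3(B)\to Um_4(R)/E_4(R)$ and the compatibility of Vaserstein symbols under the quotient $R\to B$ let one transport the vanishing of $V$ down to $B$, conclude $(\overline a_1,\overline a_2,\overline a_3)\sim e_1$ there by Theorem \ref{Rao94}, and lift back. The technical heart is checking that the Vaserstein symbol commutes with the relevant pullback/pushforward maps and that the elementary homotopy on $B$ lifts to an elementary homotopy on $R$ — this is where Corollary \ref{injstab} enters, guaranteeing that $SL_4(R)\cap E_5(R) = E_4(R)$ so that no extra stabilization obstructs descending the triviality statement from length-$4$ rows back to honest elementary equivalence of length-$3$ rows. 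The main obstacle I anticipate is precisely this bookkeeping: showing that "$V(a_1,a_2,a_3)=0$ over $R$" forces triviality in $Um_3(R)/E_3(R)$ and not merely in some stabilized orbit set, which requires combining the Bertini reduction with the prestabilization result of Corollary \ref{injstab} carefully, rather than any new geometric input.
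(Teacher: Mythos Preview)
Your surjectivity argument matches the paper's (both cite \cite[Theorem 5.2]{SV}), but your injectivity strategy is both more complicated than needed and has a real gap.

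The paper does not use any Bertini reduction here. It simply invokes \cite[Lemma 5.1]{SV}, which gives a direct criterion: the Vaserstein symbol $V$ is injective as soon as $SL_4(R)\cap E(R)=E_4(R)$. That condition is exactly what Corollary~\ref{injstab} supplies (applied with $d=4$), so the proof is two lines.

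Your proposed route, by contrast, does not go through as written. First, there is no natural element $a_4$ to cut by: you are starting from a length-$3$ unimodular row over a fourfold, not a length-$4$ row, so the Bertini step from the proof of Theorem~\ref{faselswr} has no analogue here. Second, even granting some choice of smooth threefold quotient $B=R/(a_4)$, the map you write down goes $Um_3(B)/E_3(B)\to Um_4(R)/E_4(R)$, not to $Um_3(R)/E_3(R)$; so concluding $(\overline a_1,\overline a_2,\overline a_3)\sim e_1$ in $B$ only tells you that $(a_1,a_2,a_3,a_4)\sim e_1$ in $Um_4(R)/E_4(R)$, which carries no direct information about the $E_3(R)$-orbit of $(a_1,a_2,a_3)$. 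You flag this yourself as ``bookkeeping'', but it is the entire content of the injectivity statement, and you give no mechanism to cross that gap. The clean way is to bypass the reduction altogether and use the criterion from \cite[Lemma 5.1]{SV} directly.
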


\begin{proof} L.N. Vaserstein in \cite[Theorem 5.2(c)]{SV} has shown that $V$ is onto. Apply \cite[Lemma 5.1]{SV} to conclude that $V$ is injective as $SL_4(R) \cap E(R) = E_4(R)$ in view of Corollary \ref{injstab}.
\end{proof}


\bibliographystyle{plain}
\bibliography{biblio_ravi.bib}

\noindent
Jean Fasel,\\
Mathematisches Institut der Universit\"at M\"unchen, \\
Theresienstrasse 39,\\
D-80333 M\"unchen, Germany\\
Jean Fasel $<$jean.fasel@gmail.com$>$

\medskip

\noindent
Ravi A. Rao,\\
Tata Institute of Fundamental Research,\\
1, Dr. Homi Bhabha Road, Navy Nagar, \\
Mumbai 400 005, India\\
Ravi A. Rao $<$ravi@math.tifr.res.in$>$

\medskip

\noindent
Richard G. Swan,\\
University of Chicago,\\
Chicago, Illinois, 60637, U.S.A.\\
Richard G. Swan $<$swan@math.uchicago.edu$>$

\end{document}